\theoremstyle{plain}
\newtheorem{theorem}{Theorem}[section]
\newtheorem{lemma}[theorem]{Lemma}
\newtheorem{corollary}[theorem]{Corollary}
\newtheorem{proposition}[theorem]{Proposition}
\theoremstyle{definition}
\newtheorem{definition}[theorem]{Definition}
\newtheorem{remark}[theorem]{Remark}
\newtheorem{example}[theorem]{Example}
\newcommand{\vphi}{\varphi}
\newcommand{\A}[1]{\forall #1\,}
\newcommand{\up}{\uparrow}
\newcommand{\down}{\downarrow}
\newcommand{\updown}{\uparrow\mathrel{\mspace{-2mu}}\downarrow}
\begin{document}

\title{Actions arising from intersection and union}

\author{Alex Kruckman}
\author{Lawrence Valby}

\subjclass[2010]{Primary: 03C05; Secondary: 08C15}

\keywords{actions, biactions, axiomatization}

\begin{abstract}
An action is a pair of sets, $C$ and $S$, and a function $f\colon C\times S \to C$. Rothschild and Yalcin gave a simple axiomatic characterization of those actions arising from set intersection, i.e.\ for which the elements of $C$ and $S$ can be identified with sets in such a way that elements of $S$ act on elements of $C$ by intersection. We introduce and axiomatically characterize two natural classes of actions which arise from set intersection and union. In the first class, the $\updown$-actions, each element of $S$ is identified with a pair of sets $(s^\down,s^\up)$, which act on a set $c$ by intersection with $s^\down$ and union with $s^\up$. In the second class, the $\updown$-biactions, each element of $S$ is labeled as an intersection or a union, and acts accordingly on $C$. We give intuitive examples of these actions, one involving conversations and another a university's changing student body. The examples give some motivation for considering these actions, and also help give intuitive readings of the axioms. The class of $\updown$-actions is closely related to a class of single-sorted algebras, which was previously treated by Margolis et al., albeit in another guise (hyperplane arrangements), and we note this connection. Along the way, we make some useful, though very general, observations about axiomatization and representation problems for classes of algebras.
\end{abstract}

\maketitle

\section{Introduction}\label{intro}

An \emph{action} (of $S$ on $C$ on the right) is a pair of sets, $C$ and $S$, and a function $f\colon C\times S \to C$. We denote by $S^*$ the set of words in $S$ (i.e.\ finite sequences of elements of $S$, including the empty sequence). For brevity, we write $f(c,s)$ as $cs$, so that given $c\in C$ and $w\in S^*$, $cw$ is an element of $C$.

One intuitive interpretation of actions has been given by philosophers studying conversational dynamics (as in \cite{Yalcin}, for example). Given an action $(C,S)$, we can think of $C$ as the states that a conversation can have, and $S$ as the sentences which, when said, change the state. A natural class of concrete models can be described by taking both the states $c\in C$ and the sentences $s\in S$ to be sets of possible worlds. Then saying $s$ in state $c$ corresponds to cutting down the set of possible worlds by intersection $c\cap s$.

With this motivation, Rothschild and Yalcin pointed out in \cite{Yalcin} that the actions which can be expressed using set intersection in this way are exactly the idempotent, commutative actions. In detail, an action $(C,S)$ is called \emph{idempotent} when $css=cs$ and \emph{commutative} when $cs_1s_2=cs_2s_1$. When the elements of $C$ and $S$ can be identified with subsets of some set in such a way that $cs=c\cap s$, then we say that the action can be \emph{expressed using set intersection}.

This observation of Rothschild and Yalcin, which we restate below, is a close relative of the representation theorem for semilattices: every semilattice is a subalgebra of the semilattice of subsets of $A$ under intersection, for some set $A$.

\begin{theorem}[Rothschild and Yalcin]\label{intersection} 
An action can be expressed using set intersection if and only if it is idempotent and commutative.
\end{theorem}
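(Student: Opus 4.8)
The plan is to prove the two implications separately. The ``if'' direction is immediate: if $C$ and $S$ are realized as subsets of some set $A$ with $cs=c\cap s$, then $css=c\cap s\cap s=cs$, so the action is idempotent, and $cs_1s_2=c\cap s_1\cap s_2=cs_2s_1$, so it is commutative. For the ``only if'' direction I would imitate the classical embedding of a meet-semilattice $L$ into $(\mathcal{P}(L),\cap)$ by $a\mapsto\{b\in L:b\le a\}$, the representation theorem quoted just above.

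So, assuming $(C,S)$ is idempotent and commutative, I would first record two observations. (1) The action of a word $w\in S^*$ on $C$ depends only on its set of letters, since idempotence deletes repeated letters and commutativity permutes them; equivalently, the action of $S^*$ factors through the monoid $\mathcal{P}_{\mathrm{fin}}(S)$ of finite subsets of $S$ under union. Write $cT$ for the common value of $cw$ over all words $w$ with letter-set $T$; then $c(T\cup T')=(cT)T'$. (2) Reachability is antisymmetric: if $cT=d$ and $dT'=c$ for finite $T,T'\subseteq S$, then $c=dT'=c(T\cup T')$, and acting by $T$ on both sides of $c=c(T\cup T')$ gives $cT=c(T\cup T'\cup T)=c(T\cup T')=c$, i.e.\ $d=c$.

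Next I would set $A=C\sqcup S$ and define
\[
 \iota_C(c)=\{\,cw : w\in S^*\,\},\qquad
 \iota_S(s)=\{\,c\in C : cs=c\,\}\cup\{s\},
\]
so that $\iota_C(c)$ is the orbit of $c$ (contained in the $C$-part of $A$) and $\iota_S(s)$ is the set of states fixed by $s$, tagged with $s$ itself. Three things then need checking, each a short computation built on~(1). First, $\iota_C(cs)=\iota_C(c)\cap\iota_S(s)$: for ``$\subseteq$'', every element $(cs)w$ of $\iota_C(cs)$ lies in the orbit of $c$ and is fixed by $s$, because the letter-set of the word $sw$ already contains $s$; for ``$\supseteq$'', any $x\in\iota_C(c)\cap\iota_S(s)$ lies in $C$ (the tag $s$ is not in $\iota_C(c)$), has the form $cw$, and satisfies $xs=x$, hence $x=cws=(cs)w\in\iota_C(cs)$. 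Second, $\iota_C$ is injective: $\iota_C(c)=\iota_C(c')$ makes $c$ and $c'$ reachable from each other, so $c=c'$ by~(2). Third, $\iota_S$ is injective, since $\iota_S(s)\cap S=\{s\}$.

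I expect the only real obstacle to be guessing the right pair of maps --- in particular, seeing that an element $c$ of $C$ should be represented by the set of states reachable from it (in the intersection picture, the subsets of $c$), rather than by the set of states that reach $c$, the latter choice failing the intersection identity already in small examples. The $\cup\{s\}$ in $\iota_S$ is a cosmetic device, needed only to keep $\iota_S$ injective in degenerate cases (when $C=\emptyset$, or when two distinct sentences act identically on $C$); absent these, one may simply take $A=C$.
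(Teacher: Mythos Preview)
Your proof is correct and follows essentially the same approach as the paper: the representing maps $\iota_C(c)=\{cw:w\in S^*\}$ and $\iota_S(s)=\{c:cs=c\}\cup\{s\}$ are exactly the paper's $O(c)$ and $F(s)$, and your observations (1) and (2) just make explicit the facts the paper uses inline when verifying injectivity of $O$ and the identity $O(cs)=O(c)\cap F(s)$.
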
 
\begin{proof}
It is easy to check that any action expressible using set intersection is idempotent and commutative. To see the other direction, one may identify an element $c\in C$ with $O(c)=\{cw\mid w\in S^*\}$, the orbit of $c$, and identify $s\in S$ with $F(s)=\{c\mid cs=c\}\cup\{s\}$, the fixed points of $s$ together with the tag ``$s$" to ensure $F$ is 1-1. From idempotence and commutativity it follows that if $w\in S^*$ and $dw=c$, then every element of $O(c)$ is fixed by $w$. We claim that $O$ is 1-1. If $O(c)=O(d)$ then $d\in O(c)$ and $c\in O(d)$ and so there is $w\in S^*$ with $dw=c$. By our earlier observation it follows that $d$ is fixed by $w$, so $d=dw=c$. Finally we can check that $O(cs)=O(c)\cap F(s)$. Let $csw\in O(cs)$ (where $w\in S^*$). Of course $csw\in O(c)$. Further $csws=cssw=csw$, so $csw\in F(s)$. Now let $cw\in O(c)\cap F(s)$ (where again $w\in S^*$). Then $cws=cw$, and so $csw=cw$. So $cw\in O(cs)$.
\end{proof}

Seeing that we obtain such a tidy axiomatization when looking at intersection, a natural question arises: What happens if we also throw union into the mix? From the conversational dynamics perspective described above, in the purely intersective case, sentences can only rule out possibilities. Allowing union could capture situations in which some sentences rule out possibilities, while others rule possibilities back in.

We address this question in two ways. First, in Section~\ref{actions}, we consider actions in which each element of $S$ acts by both intersection and union. We say an action $(C,S)$ is a \emph{$\updown$-action} if each element of $C$ can be identified with a set, and each element of $S$ can be identified with a pair of sets $(s^\down,s^\up)$, such that $s^\up\subseteq s^\down$, in such a way that the action of $s$ on $c$ is given by $(c\cap s^\down) \cup s^\up$. 

An alternative way of adding in union is to label each element of $S$ as an intersection element or a union element. In this setup, sentences can no longer rule out and rule in possibilities simultaneously; instead, each sentence can only do one or the other. In Section~\ref{biactions}, we introduce the class of \emph{$\updown$-biactions}, so named because they are $3$-sorted algebras $(C,S^\down,S^\up)$ with an action of $S^\down$ on $C$ by intersection and an action of $S^\up$ on $C$ by union.

Surprisingly, both of these cases are significantly more complicated than the case of actions which can be expressed using set intersection. The classes of $\updown$-actions and $\updown$-biactions do not admit equational axiomatizations; however, each class is a quasivariety, axiomatized by finitely many equational axioms (which give the equational theory of the class --- see Propositions~\ref{pmequations} and~\ref{biequations}) together with a single infinite Horn clause schema. The axioms will be explained later (in an intuitive way in Section~\ref{intuition} and in a mathematical way in later sections), but we will write them down here for reference.

$\updown$-actions are axiomatized by idempotence (I), previous redundance (PR), and the strong links axioms (SL). Below, $c$, $d$, and the $a_i$ are variables of sort $C$, $s$ and $t$ are variables of sort $S$, and the $w_i$ are words of sort $S^*$ (arbitrary sequences of variables from $S$).
\begin{align*}
\text{(I)} \hspace{.25in} & css = cs\\
\text{(PR)} \hspace{.25in} & csts = cts\\
\text{(SL)} \hspace{.25in}  & \Bigg(\left(\bigwedge_{i=1}^n cw_i=dw_i\right)\wedge c = a_0 \wedge d=a_n \wedge\\
& \left(\bigwedge_{i=1}^n a_{i-1}w_i=a_{i-1}\wedge a_{i}w_i=a_{i}\right)\Bigg)\rightarrow (c=d)
\end{align*}

$\updown$-biactions are axiomatized by idempotence (I), previous redundance (PR), commutativity (C) in $S^\up$ and $S^\down$, and the subset axioms (S). Below, $c$, $d$, and $e$ are variables of sort $C$, $s$ and $u$ are variables of sort $S^\down$, $t$ and $v$ are variables of sort $S^\up$, and $w$ is a word of sort $(S^\down\cup S^\up)^*$ (an arbitrary sequence of variables from $S^\down$ and $S^\up$).
\begin{align*}
\text{(I)} \hspace{.25in} & css = cs \hspace{.25in} ctt = ct\\
\text{(PR)} \hspace{.25in} & csts = cts \hspace{.25in} ctst = cst\\
\text{(C)} \hspace{.25in} & csu = cus \hspace{.25in} ctv = cvt\\
\text{(S)} \hspace{.25in} & (csw = dsw \land ctw = dtw \land esw = etw)\rightarrow (cw = dw)
\end{align*}

In Section~\ref{intuition}, we give two examples of how $\updown$-actions (and $\updown$-biactions) may arise, and we use these examples to give an intuitive reading of the axioms just stated. The first example has to do with a university's changing student body. The second has to do with conversations, but we take something of a different approach from the possible worlds framework mentioned above. The section thus serves two functions. On the one hand it gives an intuitive perspective on the mathematical structures under discussion in this paper, and on the other hand it supplies some motivation for considering these structures in the first place.

Indeed, the conversation example was our original motivation for studying $\updown$-actions and $\updown$-biactions. In future work, it would be interesting to study these actions as models for conversation from the point of view of formal semantics and the philosophy of language, and to compare with existing approaches in the literature (for example inquisitive and attentive semantics in \cite{Ciardelli} and \cite{Roelofsen}). However, in this paper, we focus mainly on the axiomatization problem for the actions in question. This is a natural mathematical question which is only indirectly motivated by the examples in Section~\ref{intuition}; nevertheless, our solutions to the axiomatization problems do increase our understanding of $\updown$-actions and $\updown$-biactions in ways which could be useful in future work.

In Section~\ref{generalities}, we review some general results on axiomatization and representation problems, which are needed for the rest of the paper. In particular, the classes of $\updown$-actions and $\updown$-biactions can both be described using a certain operation which takes as input a set $X$ and outputs an algebra $F(X)$, the ``full" $\updown$-action or  $\updown$-biaction on $X$. Then the class of algebras in question is the class of subalgebras of full algebras. We observe that whenever an operation $F$ from sets to algebras turns disjoint unions into products, the quasivariety generated by algebras of the form $F(X)$ is in fact generated by the single algebra $F(1)$, and draw some useful conclusions.

Sections~\ref{actions} and~\ref{biactions} are devoted to the classes of $\updown$-actions and $\updown$-biactions, respectively. The main axiomatization results are Theorem~\ref{main theorem} and Theorem~\ref{biaction theorem}.

The class of $\updown$-actions is closely related to a certain class of single-sorted algebras we call set bands, which we discuss in Section~\ref{set bands}. It turns out that the class of set bands is exactly the quasivariety generated by a certain $3$-element semigroup. This quasivariety was studied and axiomatized in \cite{Margolis}, but with the motivation coming from hyperplane arrangements. The connection to $\updown$-actions provides an additional motivation for studying this quasivariety.

Our solution to the $\updown$-action axiomatization problem was obtained after observing the connection with the single-sorted set bands axiomatization problem, and adapting the solution of Margolis et al.\ in \cite{Margolis} to the action case. The argument in the case of $\updown$-biactions is different than in the case of $\updown$-actions, but it shares the same basic structure.

We would like to acknowledge George Bergman, Richard Lawrence, Tom Scanlon, and Seth Yalcin for helpful suggestions and conversations. The final publication is available at Springer via \url{http://dx.doi.org/10.1007/s10849-016-9240-0}.

\section{Intuitive examples}\label{intuition}

In this section we present two intuitive sources of $\updown$-actions (and $\updown$-biactions). Our goal here is to motivate interest in these actions, and also to give an intuitive explanation of the axioms.  

First, let's consider the student body of a university. We conceive of this simply as a set of students. Based on the behavior of the appropriate university official, the student body will change. Sometimes the official will do something that adds students to the student body, sometimes the official will do something that removes students from the student body, and sometimes the official may do something that both removes and adds students. In such a situation we obtain an $\updown$-action $C\times S\to C$. The elements of $C$ are sets of people, and the elements of $S$ are pairs $s=(s^\down,s^\up)$ of sets of people. Given a fixed $s$ (let's call it an ``act" of the official), the effect of the function $c\mapsto cs$ is to remove from the student body all people not in $s^\down$, and add to the student body all people in $s^\up$. In the special case the official at a given time must either just add or just remove people, we obtain an $\updown$-biaction.

It's reasonable to be interested in the algebraic behavior of this student body action, without wanting to think specifically about the sets involved. For example, observe that if the official performs the same act $s$ twice in a row, this has the same effect as just performing it once. This is the first of our axioms for $\updown$-actions. In detail, idempotence states that for all $c$ in $C$ and all $s$ in $S$ we have $css=cs$. Of course, this student body action is not commutative. If $s$ adds John Doe and $t$ removes John Doe, then the order in which $s$ and $t$ are performed obviously matters. On the other hand, what if the official performs $s$, then performs $t$, and then performs $s$ once again? Certainly the second performance of $s$ is important because, e.g., $t$ may remove some student that $s$ adds. However, the first performance of $s$ may be omitted without changing the net result. In detail, whoever the first $s$ would add or remove will still be added or removed by the second $s$. This is our second axiom for $\updown$-actions. Previous redundance states that for all $c$ in $C$ and all $s$ and $t$ in $S$ we have $csts=cts$.

These two axioms (idempotence and previous redundance) characterize the equational theory of $\updown$-actions. That is, every other equation which holds universally in $\updown$-actions follows from these equations. For example, $csttus = ctus$, since $csttus = cstus = cstsus = ctsus = ctus$. Intuitively, any sequence of acts that the official performs is equivalent to the one where the official only performs the last instance of each distinct act in the sequence (in the inherited order), and further simplification isn't possible in general. The strong links axiom schema is needed to fully axiomatize $\updown$-actions, i.e.\ to be able to say that if an abstract action satisfies the axioms, then it is (isomorphic to) an $\updown$-action. It too may be given an intuitive reading, but let us give this in the context of our next example.

Consider a ``conversation" where we assume there is a shared conversational state that is successively changed by participants saying sentences. We conceive of the conversational state simply as a set of possible facts. For example, perhaps ``It is raining" is in the conversational state. When a possible fact is in the state, the participants are actively considering it as possible. When a participant says a sentence, some possible facts may be added to the state, some possible facts may be removed from the state, or both of these things may happen at once. We thus have an $\updown$-action $C\times S\to C$. The elements of $C$ are the shared conversational states (i.e.\ sets of possible facts), and the elements of $S$ are the sentences, which are formally pairs $s=(s^\down,s^\up)$ of sets of possible facts. Given a fixed sentence $s$, the effect of the function $c\mapsto cs$ is to remove from the state all possible facts not in $s^\down$, and add to the state all possible facts in $s^\up$. In the special case every sentence just adds or just removes possible facts, we obtain an $\updown$-biaction.

Idempotence and previous redundance may be given an intuitive reading here as well, but let's focus now on the strong links axiom schema. We first make a couple of preliminary observations.

When a participant says $s$, certain fixed possible facts are removed or added. Let us call the possible facts that may be removed or added by $s$ ``the material relevant to $s$". Now for our first observation. Suppose that $cs=ds$. That is, although $c$ and $d$ may be different conversational states, nevertheless they become the same state after saying $s$ at each of them. The observation is that $c$ and $d$ must agree on material not relevant to $s$. That is, given some possible fact that is neither removed nor added by $s$, $c$ and $d$ must either both contain that possible fact or both not contain that possible fact. 

Now for the second observation. Suppose that $cs=c$ and $ds=d$. That is, when the state is either $c$ or $d$, and someone says $s$, the state remains as it is. The observation is that $c$ and $d$ must agree on material relevant to $s$. To see this, observe that any possible fact removed by $s$ is in neither $c$ nor $d$, and any possible fact added by $s$ is in both $c$ and $d$. So in fact $c$ and $d$ not only agree on material relevant to $s$, they agree on material relevant to $s$ in the particular way prescribed by $s$.

Armed with these two observations, we are now in a position to give an intuitive explanation of the strong links axiom schema (SL). Let us consider a special case that already illustrates the ideas involved. In the notation of Section~\ref{intro}, the special case we consider is where $n=2$ and the sequences of variables $w_1$ and $w_2$ are just individual variables $s_1$ and $s_2$. Specifically, the axiom states that if you have states $a_0,a_1,a_2$ and sentences $s_1,s_2$ with $a_0s_i=a_2s_i$ and $a_{i-1}s_i=a_{i-1}$ and $a_is_i=a_i$ for $i=1,2$, then $a_0=a_2$. Using our first observation above, $a_0s_i=a_2s_i$ tells us that the states $a_0$ and $a_2$ agree on the material not relevant to $s_i$, for $i=1$ and $i=2$. To show that $a_0$ and $a_2$ agree everywhere, it remains to show that they agree on material relevant both to $s_1$ and $s_2$. Since $a_0s_1=a_0$ and $a_1s_1=a_1$, our second observation above tells us that $a_0$ and $a_1$ agree on material relevant to $s_1$. Similarly, as $a_1s_2=a_1$ and $a_2s_2=a_2$, we get that $a_1$ and $a_2$ agree on material relevant to $s_2$. Thus, $a_0$ and $a_2$ must agree also on material relevant both to $s_1$ and $s_2$, and so they agree on all material and should be the same state. 

A similar intuitive reading may be given of the other strong links axioms (e.g., in the cases $n > 2$) and of the subset axioms for $\updown$-biactions, but we will omit these for the sake of brevity.

Another way to treat conversations would be to conceive of the shared conversational state as a set of possible worlds, rather than a set of possible facts. Indeed, this point of view is apparent in the work of Yalcin and Rothschild \cite{Yalcin} discussed in Section~\ref{intro}. On this view the conversational state contains the possible worlds that the participants jointly consider possible at that point. We could also take this view, and the intuitive explanation of the axioms would still work, but the kind of behavior we might want sentences to have would not be attainable in certain intuitive situations. We now give a concrete example illustrating how the possible worlds view can fall short in the context of $\updown$-actions. Along the way we acquire some additional motivation for both the possible facts conception of the conversational state and $\updown$-actions in general.

Consider a situation where there are 4 possible worlds $W=\{00,01,10,11\}$. Each possible world makes the determination whether it is raining or not, and whether the cat is hungry or not. For example, 10 is the possible world where it is raining (because there is a ``1" in the first coordinate) and the cat is not hungry (because there is a ``0" in the second coordinate). Now assume that there are 6 sentences which may be said: 
\begin{align*}
s_1&=\text{``It may or may not be raining"}\\
s_2&=\text{``The cat may or may not be hungry"}\\
t_1&=\text{``It is raining"}\\
u_1&=\text{``It is not raining"}\\
t_2&=\text{``The cat is hungry"}\\
u_2&=\text{``The cat is not hungry"}
\end{align*}
Under the possible worlds view, we have $C\subseteq\mathcal{P}(W)$, i.e.\ a state $c\in C$ is a set of possible worlds. Let's focus attention on conversations that start from ignorance. That is, the conversations start in state $W$, and so $C$ is taken to be the collection of states reachable from $W$ by saying a sequence of sentences. 

Now we stipulate how the sentences act on the states. First, we stipulate that each of the sentences $t_i,u_i$ acts by intersecting the current state with the appropriate fixed set. For example, $ct_1=c\cap\{10,11\}$ --- intuitively this makes sense because saying $t_1$ should remove the worlds where it is not raining. It's not as obvious how we should assume $s_1$ and $s_2$ act. But one intuitive assumption to make in particular is that $\{10,11\}s_1=W$ and $\{11\}s_1=\{01,11\}$. Intuitively, if we think it's raining, and someone says it may or may not be raining, then we don't think it's raining anymore. However, this sentence doesn't change our views about whether that cat is hungry. More generally, one mathematically natural definition for $s_1$ and $s_2$ extending this particular stipulation is to view them as cylindrifications of the first and second coordinates respectively. E.g., $cs_1$ is the state obtained from $c$ by adding to it all the possible worlds that differ in just the first coordinate from a possible world already in $c$.

Certainly the action above is not directly presented as an $\updown$-action. Indeed, the cylindrifications patently add possible worlds in a way that depends on the current state. But the question remains whether this action is algebraically an $\updown$-action.

The possible worlds setup is very sensitive to contradiction. For example, if someone says it's raining and someone says it's not raining, then we are led to the state $\emptyset$, losing any information we may have had about whether the cat is hungry. It's because of this sensitivity to contradiction that the action above is not an $\updown$-action. E.g., $Wt_1s_1u_1s_1=W\neq \emptyset=Wt_1u_1s_1$, violating previous redundance. 

However, the action is essentially an $\updown$-action if we purposefully avoid contradiction. To see this, we move to the possible facts point of view. Let $W'=\{R,\neg R,H, \neg H\}$ be the collection of possible facts for our scenario (e.g.\ ``$\neg R$" is the possible fact that it is not raining). We define an $\updown$-action as follows: $s_1$ adds $R$ and $\neg R$, $s_2$ adds $H$ and $\neg H$, $t_1$ removes $\neg R$, $t_2$ removes $\neg H$, $u_1$ removes $R$, and $u_2$ removes $H$. This stipulation matches the behavior we might intuitively expect based on the English glosses of the sentences. The algebraic behavior of this action is equivalent to our original possible worlds action in the sense that $W'w_1=W'w_2$ iff $Ww_1=Ww_2$ where $w_1$ and $w_2$ are sequences of sentences that never lead to a contradiction in the possible worlds sense. Additionally, the possible facts point of view is not as sensitive to contradiction: if someone says it's raining and someone says it's not raining, we don't lose any information we have about whether the cat is hungry.

The example above has shown that interest in the possible worlds approach to conversations and the operation of cylindrification in that context naturally leads to considering the possible facts approach and $\updown$-actions.

\section{Axiomatization and representation problems}\label{generalities}

The problems addressed in this paper fit into a general class of axiomatization and representation problems. Suppose we are interested in a class of structures $K$. Then we have an \emph{axiomatization problem}: Find a set of axioms $T$ (often of a desirable form) which characterizes the structures in $K$ up to isomorphism. Having selected a candidate set of axioms $T$, we are faced with a \emph{representation problem}: Show that every ``abstract'' model of $T$ is isomorphic to one of the ``concrete'' structures in $K$.

Familiar examples include Cayley's theorem, which says that every abstract group is isomorphic to a group of permutations of some set, and Stone's theorem, which says that every abstract Boolean algebra is isomorphic to an algebra of sets.

In this section we make some general observations about these problems, which will be useful in the special cases of $\updown$-actions and $\updown$-biactions. We assume that reader is familiar with the basic definitions of first-order logic (see \cite{Hodges}, for example).

\begin{definition}\label{elementary}
Let $\Sigma$ be a signature, and let $K$ be a class of $\Sigma$-structures.
\begin{itemize}
\item $K$ is \emph{elementary} is there is a first-order $\Sigma$-theory $T$ such that $K$ is the class of models of $T$. 
\item $K$ is \emph{pseudo-elementary} if there is a signature $\Sigma'\supseteq \Sigma$ and a first-order $\Sigma'$-theory $T'$ such that $K$ is the class of reducts to $\Sigma$ of models of $T'$.
\end{itemize}
\end{definition}

We will primarily consider classes with universal axiomatizations and Horn clause axiomatizations.

\begin{definition}\label{universal}
A \emph{universal sentence} is a sentence of the form $\A{\overline{x}}\vphi(\overline{x})$, where $\vphi$ is quantifier-free. A \emph{universal theory} is a set of universal sentences.
\end{definition}

\begin{definition}\label{horn}
A \emph{Horn clause} is a formula of the form 
\[
(\vphi_1\land \dots \land \vphi_n) \rightarrow \psi,
\]
where $\vphi_1,\dots,\vphi_n$ and $\psi$ are atomic. A \emph{Horn clause theory} is a set of Horn clauses. Identifying the Horn clause $\theta(\overline{x})$ with its universal closure $\A{\overline{x}}\theta(\overline{x})$, every Horn clause theory is a universal theory.
\end{definition}

Note that every atomic formula is a Horn clause, taking the left hand side of the implication to be the empty conjunction.

It is easy to check that every sentence which is equivalent to a universal sentence is preserved under substructure, and every sentence which is equivalent to a Horn clause is preserved under substructure and product. It is a well-known theorem of model theory that the converse statements are true.
 
\begin{theorem}[\cite{Hodges} Theorem 6.6.7 and Exercise 9.2.1]\label{preservation}
Let $K$ be a pseudo-elementary class of $\Sigma$-structures.
\begin{itemize}
\item $K$ is closed under substructure if and only if $K$ can be axiomatized by a universal theory in $\Sigma$.
\item $K$ is closed under substructure and product if and only if $K$ can be axiomatized by a Horn clause theory in $\Sigma$.
\end{itemize}
In particular, in either of these cases, $K$ is elementary.
\end{theorem}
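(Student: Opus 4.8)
The two forward implications are the easy preservation observations recorded just before the statement, so the plan is to prove the two converses; the final ``in particular'' clause then follows at once, since every universal theory and every Horn clause theory is a first-order theory. Fix once and for all a signature $\Sigma' \supseteq \Sigma$ and a $\Sigma'$-theory $T'$ such that $K$ consists exactly of the $\Sigma$-reducts of the models of $T'$, and for each element $a$ of a $\Sigma$-structure $M$ take a fresh constant symbol $c_a$ (none of which occurs in $T'$).

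For the universal case I would run the classical {\L}o\'{s}--Tarski argument adapted to this pseudo-elementary setting. Let $T_\forall$ be the set of all universal $\Sigma$-sentences true in every member of $K$; every member of $K$ satisfies $T_\forall$, so it suffices to show that an arbitrary model $M \models T_\forall$ lies in $K$, and since $K$ is closed under substructure (and isomorphism) it is enough to embed $M$ into some member of $K$. For that, work in the signature $\Sigma' \cup \{c_a : a \in M\}$ and consider $T' \cup \mathrm{Diag}(M)$, where $\mathrm{Diag}(M)$ is the atomic diagram of $M$: in any model $P$ of this theory, the $\Sigma$-reduct of $P$ lies in $K$ and $a \mapsto c_a^P$ embeds $M$ into it, so it remains only to prove consistency. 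Were it inconsistent, compactness would produce a quantifier-free $\delta(\overline{x})$ with $\delta(c_{a_1},\dots,c_{a_n})$ a conjunction of sentences of $\mathrm{Diag}(M)$ that is inconsistent with $T'$; since the $c_{a_i}$ do not occur in $T'$ this gives $T' \models \A{\overline{x}}\neg\delta(\overline{x})$, a universal $\Sigma$-sentence, hence one true throughout $K$, hence a member of $T_\forall$, hence true in $M$ --- contradicting that $\delta(c_{a_1},\dots,c_{a_n})$ holds in $M$ under $c_{a_i} \mapsto a_i$.

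For the Horn case the same skeleton works, but with the crucial change that one targets an embedding of $M$ into a \emph{product} of members of $K$ rather than into a single member, and this is where the real content lies. Let $T_H$ be the set of all Horn clauses over $\Sigma$ (allowing atomic formulas themselves as the degenerate case) true throughout $K$; again every member of $K$ models $T_H$, and I want to show $M \models T_H$ implies $M \in K$. The key observation is that an embedding of $M$ into a product $\prod_{j \in J} N_j$ is precisely a family of homomorphisms $h_j \colon M \to N_j$ such that, for every atomic formula $\psi(\overline{x})$ (including $x = y$) and every tuple $\overline{a}$ from $M$ with $M \models \neg\psi(\overline{a})$, at least one $N_j$ satisfies $\neg\psi(h_j\overline{a})$ --- the case $\psi \equiv (x=y)$ expressing that the $h_j$ are jointly injective. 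So it is enough, for each such pair $(\psi,\overline{a})$, to produce a member $N \in K$ and a homomorphism $h \colon M \to N$ with $N \models \neg\psi(h\overline{a})$; assembling these over all pairs and using closure under products and then under substructure puts $M$ in $K$. Such an $N$ and $h$ come from a compactness argument on $T' \cup \mathrm{Diag}^+(M) \cup \{\neg\psi(c_{a_1},\dots,c_{a_n})\}$, where $\mathrm{Diag}^+(M)$ is the \emph{positive} atomic diagram of $M$ (a homomorphism, rather than an embedding, is all we need from each factor). Inconsistency of this theory would, exactly as in the universal case, force $T' \models \A{\overline{x}}(\delta^+(\overline{x}) \to \psi(\overline{x}))$ for some conjunction of atomic formulas $\delta^+$ --- but that sentence is a Horn clause, so it belongs to $T_H$ and holds in $M$, contradicting $M \models \delta^+(\overline{a}) \wedge \neg\psi(\overline{a})$.

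The main obstacle, and the one genuinely new point relative to the universal case, is exactly that change of target: seeing that one must embed $M$ into a product and therefore work with the positive diagram together with a \emph{single} negated atomic formula at a time, which is what forces the offending finite subtheory to take the shape of a Horn clause rather than of an arbitrary universal sentence. Apart from that, only routine care is needed with the degenerate cases --- the empty product and the resulting terminal structure, and the empty conjunction $\delta^+ = \top$ --- once one fixes the convention that atomic formulas count as Horn clauses.
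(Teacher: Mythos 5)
Your argument is correct: the paper states this result as a citation to Hodges and gives no proof of its own, and what you have written is precisely the standard diagram-plus-compactness proof of the {\L}o\'{s}--Tarski and Horn preservation theorems adapted to the pseudo-elementary setting (embed into a single member of $K$ via the full diagram for the universal case; separate each failed atomic formula by a homomorphism into a member of $K$ via the positive diagram, then assemble into a product, for the Horn case). The one genuine point of delicacy --- that the terminal structure must lie in $K$, i.e.\ that closure under products includes the empty product, since Horn clauses with atomic conclusions are always true there --- you have already flagged, and it is harmless for the classes considered in this paper.
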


The knowledge that a class $K$ is (pseudo-)elementary can be used to reduce the representation problem for $K$ to the case of finitely generated structures.

\begin{proposition}\label{fg} 
Let $K$ be a pseudo-elementary class which is closed under substructure, and let $T$ be a universal theory. If every finitely generated model of $T$ is in $K$, then every model of $T$ is in $K$.
\end{proposition}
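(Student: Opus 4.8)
The plan is to use Theorem~\ref{preservation} to replace the (a priori only pseudo-elementary) description of $K$ by an honest universal first-order theory in the signature $\Sigma$, and then run the standard argument that a universal consequence can be verified on finitely generated substructures.

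First I would observe that, since $K$ is pseudo-elementary and closed under substructure, Theorem~\ref{preservation} supplies a universal $\Sigma$-theory $T_K$ with $K = \mathrm{Mod}(T_K)$. It therefore suffices to show $T \models T_K$, i.e.\ that every model of $T$ satisfies every sentence of $T_K$; this is exactly the statement that $\mathrm{Mod}(T) \subseteq \mathrm{Mod}(T_K) = K$.

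Next, fix a model $M \models T$, a universal sentence $\A{\overline{x}}\vphi(\overline{x}) \in T_K$ with $\vphi$ quantifier-free, and a tuple $\overline{a}$ from $M$. Let $N$ be the substructure of $M$ generated by $\overline{a}$; this is finitely generated. Since $T$ is universal and universal sentences are preserved under substructure, $N \models T$, so by hypothesis $N \in K = \mathrm{Mod}(T_K)$, and hence $N \models \vphi(\overline{a})$. Because $\vphi$ is quantifier-free and $\overline{a}$ lies in $N$, satisfaction of $\vphi(\overline{a})$ is absolute between $N$ and $M$, so $M \models \vphi(\overline{a})$. As $\overline{a}$ was arbitrary, $M \models \A{\overline{x}}\vphi(\overline{x})$; as the sentence was arbitrary, $M \models T_K$, i.e.\ $M \in K$.

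I do not expect a serious obstacle here; the only points needing care are (i) that Theorem~\ref{preservation} genuinely yields $T_K$ in the original signature $\Sigma$ rather than in the expanded signature $\Sigma'$, which is precisely what it asserts, so that the tuple $\overline{a}$ and the substructure $N$ make sense; (ii) the elementary facts that universal sentences pass to substructures and that quantifier-free formulas are absolute between a structure and its substructures; and (iii) the degenerate case in which $\overline{x}$ is empty, which is handled by the same argument applied, say, to the substructure generated by $\emptyset$.
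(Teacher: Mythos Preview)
Your proof is correct and is essentially identical to the paper's own argument: invoke Theorem~\ref{preservation} to obtain a universal $\Sigma$-theory $T_K$ axiomatizing $K$, then for each universal sentence of $T_K$ and each tuple $\overline{a}$ in a model $M\models T$, pass to the finitely generated substructure, use the hypothesis to place it in $K$, and pull the quantifier-free condition back up to $M$. The only differences are cosmetic (naming conventions and your explicit remarks on points (i)--(iii)).
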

\begin{proof}
By Theorem~\ref{preservation}, $K$ is elementary, axiomatized by a universal theory $T_K$. Given a model $A\models T$, we need to show that $A\models T_K$.

Let $\psi\in T_K$, written as $\A{\overline{x}}\vphi(\overline{x})$, with $\vphi$ quantifier-free, and let $\overline{a}$ be from $A$. Let $B_{\overline{a}}$ be the substructure of $A$ generated by $\overline{a}$. Then $B_{\overline{a}}\models T$, since $T$ is universal, and hence $B_{\overline{a}}\in K$, since it is finitely generated. Hence $B_{\overline{a}}\models\psi$, so $B_{\overline{a}}\models \vphi(\overline{a})$, and since $\vphi$ is quantifier-free, $A\models \vphi(\overline{a})$. 
\end{proof}

In the examples of Cayley's theorem and Stone's theorem, as well as in our cases of $\updown$-actions and $\updown$-biactions, the class $K$ is the class of substructures of some ``full'' structures. Then the representation problem becomes the problem of embedding each model of $T$ into one of these full structures.

When the full structures are obtained from sets by a construction which turns disjoint unions of sets into products of structures (e.g. in the case of Boolean algebras, but not in the case of groups), the class $K$ is controlled by the full structure on the one element set, in a way we will now make precise.

Fix a function $F$ associating to each set $X$ a structure $F(X)$, such that
\begin{enumerate}
\item If there is a bijection between $X$ and $Y$, then there is an isomorphism between $F(X)$ and $F(Y)$, and
\item $F$ turns disjoint unions of sets into products of structures, i.e.\ 
\[
F\left(\biguplus_{i\in I}X_i\right) \cong \prod_{i\in I}F(X_i).
\]
\end{enumerate}

Call the structures in the image of $F$ full, and let $K$ be the class of (structures isomorphic to) substructures of full structures.

\begin{proposition}\label{F(1)} 
Let $F$ and $K$ be as defined above, and let $1$ be the one element set $\{*\}$.
\begin{enumerate}
\item The class $K$ is closed under substructure and product.
\item Every structure $A\in K$ embeds canonically into a product of copies of $F(1)$, indexed by the set of homomorphisms from $A$ to $F(1)$.
\[
A \hookrightarrow \prod_{\text{Hom}_K(A,F(1))}F(1) \cong F(\text{Hom}_K(A,F(1)))
\]
\item If $K$ is pseudo-elementary, then it is elementary, axiomatized by the Horn clause theory of the structure $F(1)$.
\end{enumerate}
\end{proposition}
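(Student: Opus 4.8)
The plan is to treat the three parts in sequence; part~(1) feeds into part~(3), and the embedding produced in part~(2) is what makes part~(3) work. For part~(1), closure under substructure is immediate, since a substructure of an isomorphic copy of a substructure of a full structure is again, up to isomorphism, a substructure of that same full structure. For closure under products, given $A_i\in K$ for $i\in I$, fix embeddings $A_i\hookrightarrow F(X_i)$; their product is an embedding $\prod_{i\in I}A_i\hookrightarrow\prod_{i\in I}F(X_i)$, and property~(2) identifies the target with the full structure $F(\biguplus_{i\in I}X_i)$, so $\prod_{i\in I}A_i\in K$.

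For part~(2), the crucial observation is that writing a set $X$ as the disjoint union of its singletons and applying properties~(1) and~(2) gives $F(X)\cong\prod_{x\in X}F(\{x\})\cong\prod_{x\in X}F(1)$. Hence any $A\in K$ carries an embedding $e\colon A\hookrightarrow\prod_{x\in X}F(1)$ for some set $X$, and composing with the coordinate projections yields homomorphisms $h_x=\pi_x\circ e\in\text{Hom}_K(A,F(1))$. I would then check that the canonical evaluation map $A\to\prod_{h\in\text{Hom}_K(A,F(1))}F(1)$ sending $a$ to $(h(a))_h$ is an embedding: it is a homomorphism automatically; it is injective because already the subfamily $(h_x)_{x\in X}$ separates points of $A$, as $e$ is injective; and it reflects relations, since if a tuple $\overline{a}$ satisfies $h(\overline{a})\in R^{F(1)}$ for all $h\in\text{Hom}_K(A,F(1))$, then in particular $\pi_x(e(\overline{a}))\in R^{F(1)}$ for all $x\in X$, so $e(\overline{a})$ lies in the relation $R$ of $\prod_{x\in X}F(1)$, whence $\overline{a}\in R^A$ because $e$ is an embedding. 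Properties~(1) and~(2) once more give $\prod_{h\in\text{Hom}_K(A,F(1))}F(1)\cong F(\text{Hom}_K(A,F(1)))$, yielding the displayed chain. (When $\text{Hom}_K(A,F(1))=\emptyset$ the statement degenerates and holds trivially, both sides being a terminal structure.)

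For part~(3), assume $K$ is pseudo-elementary. By part~(1) it is closed under substructure and product, so Theorem~\ref{preservation} gives a Horn clause theory $T_K$ axiomatizing $K$. The structure $F(1)$ is full, hence lies in $K$, so every clause of $T_K$ holds in $F(1)$; thus $T_K$ is contained in the set of all Horn clauses true in $F(1)$, and therefore the models of that set are contained in the models of $T_K$, i.e.\ in $K$. Conversely, if $A\in K$ then by part~(2) $A$ embeds into a product of copies of $F(1)$, and since Horn clauses are preserved under products and substructures, $A$ satisfies every Horn clause true in $F(1)$. So $K$ is exactly the class of models of the Horn clause theory of $F(1)$. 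I do not expect a serious obstacle: the substantive model-theoretic content is already packaged in Theorem~\ref{preservation}, and what remains is assembly. The one point demanding care is the verification in part~(2) that the evaluation map reflects relations, which rests squarely on the identification $F(X)\cong\prod_{x\in X}F(1)$ supplied by properties~(1) and~(2).
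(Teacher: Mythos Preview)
Your proof is correct and follows essentially the same route as the paper's: the same closure argument for part~(1), the same singleton-decomposition $F(X)\cong\prod_{x\in X}F(1)$ to obtain separating homomorphisms in part~(2), and the same appeal to Theorem~\ref{preservation} plus preservation of Horn clauses under products and substructures in part~(3). The only notable difference is that you take extra care to verify the evaluation map reflects relations; in the paper's setting the signatures are purely functional (multi-sorted algebras), so an injective homomorphism is automatically an embedding and this step is unnecessary, though of course harmless.
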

\begin{proof}
(1): $K$ is closed under substructure by definition. If $\{A_i\}_{i\in I}$ is a collection of structures in $K$, then each $A_i$ embeds in some full structure $F(X_i)$. Then $\prod_{i\in I}A_i$ embeds in $\prod_{I} F(X_i)\cong F(\biguplus_{I} X_i)$, so $\prod_{I}A_i$ is in $K$.

(2): First, observe that for all $X$, $X$ can be expressed as an $X$-indexed disjoint union of copies of $1$: $X = \biguplus_{x\in X}1$. So $F(X) \cong F(\biguplus_{x\in X}1) \cong \prod_{x\in X}F(1)$. Hence every structure $A$ in $K$ embeds into a product of copies of $F(1)$. 

For the canonical embedding, note that if $A$ embeds into some product of copies of $F(1)$, then for every pair of distinct elements $a$ and $b$ in the same sort of $A$, one of the coordinate maps $\vphi\colon A\to F(1)$ separates $a$ and $b$, i.e.\  $\vphi(a)\neq \vphi(b)$. Then if $A$ is in $K$, the map $A\to \prod_{\vphi\in \text{Hom}(A,F(1))}F(1)$ which is $\vphi$ on the component indexed by $\vphi$ is an embedding, since each of these separating maps appears in some coordinate.

(3): By Theorem~\ref{preservation}, any pseudo-elementary class closed under substructure and product is axiomatizable by a Horn clause theory. 

Let $\vphi$ be a Horn clause. If $\vphi$ is true in every structure in $K$, then clearly it is true of $F(1)$. Conversely, if $\vphi$ is true of $F(1)$, then since every $A$ in $K$ is isomorphic to a substructure of a product of copies of $F(1)$, and Horn clauses are preserved under substructures and products, $\vphi$ is true of $A$. 
\end{proof}

\begin{remark}\label{categories}
We have avoided the language of category theory above, as it is not necessary for our presentation, but it's worth observing how Proposition~\ref{F(1)} fits into a categorical framework. Let $\textsf{K}$ be the category whose objects are structures in $K$ and whose arrows are homomorphisms. Then the function $F$ can be extended to a functor $F\colon  \textsf{Set}^{\text{op}}\to \textsf{K}$, the functor $\text{Hom}_\textsf{K}(-,F(1))$ is left-adjoint to $F$, and the canonical embedding from Proposition~\ref{F(1)} is the unit map of this adjunction.
\end{remark}

\section{$\updown$-actions}\label{actions}

We begin by reviewing our notational conventions for actions. We view an action $(C,S)$ as an algebra in a two-sorted signature with a single function symbol $f\colon C\times S\to C$. When $c$ and $s$ are elements or variables of sorts $C$ and $S$, respectively, we write $cs$ for $f(c,s)$. We denote by $S^*$ the set of words in $S$. Given $c\in C$ and $w\in S^*$, $cw$ is an element of sort $C$. For all $w\in S^*$, let $f_w\colon C\to C$ be the function $c\mapsto cw$. We say $w$ is an \emph{identity operation} if $f_w$ is the identity function, and $w$ is a \emph{constant operation} (with value $d$) if $f_w$ is the constant function $f_w(c) = d$ for all $c\in C$.

Given a set $X$, we form an action $F(X)$ called the \emph{full $\updown$-action on $X$} by setting 
\begin{align*} 
C &= \{c\mid c\subseteq X\}\\ 
S &= \{(s^\down,s^\up)\mid s^\down,s^\up\subseteq X\text{ and }s^\up\subseteq  s^\down\}\\ 
f(c, (s^\down,s^\up)) &= (c \cap s^\down) \cup s^\up.
\end{align*}
An action is an \emph{$\updown$-action} if it is isomorphic to a subalgebra of the full $\updown$-action on some set $X$. In other words, an $\updown$-action is an action $(C,S)$ where each element of $C$ can be identified with a subset $c$ of some set $X$ and each element of $S$ can be identified with a pair $(s^\down,s^\up)$ of subsets of $X$ with $s^\up\subseteq s^\down$, such that the action of $s$ on $c$ is given by intersection with $s^\down$ and union with $s^\up$.

Note that the condition that $s^\up\subseteq s^\down$ implies that $(c\cap s^\down)\cup s^\up = (c\cup s^\up)\cap s^\down$, so the order of operations in the definition doesn't matter. This restriction is convenient but not important; in Proposition~\ref{other classes} below, we show that if we allow all pairs of subsets of $X$ in the $S$ sort, we get the same class of algebras up to isomorphism.

We will now apply the generalities of Section~\ref{generalities} to the case of $\updown$-actions.

\begin{proposition}
\label{pseudo-elementary actions}
The class of $\updown$-actions is pseudo-elementary.
\end{proposition}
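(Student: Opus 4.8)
The plan is to exhibit an explicit expansion of the signature and a first-order theory $T'$ in the larger signature whose reducts to the action signature $\{f\colon C\times S\to C\}$ are exactly the $\updown$-actions. The natural choice is to introduce a third sort $X$ (the ``ambient set'' of the representation) together with function or relation symbols encoding the membership data: a binary relation $\in_C \subseteq C\times X$, and binary relations $\in^\down_S, \in^\up_S \subseteq S\times X$, recording which elements of $X$ belong to $c$, to $s^\down$, and to $s^\up$ respectively. (Equivalently one could use characteristic functions $C\times X\to 2$ and so on, but relations are cleanest.)

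Next I would write down $T'$ as the conjunction of: (i) extensionality-type axioms saying that two elements of sort $C$ with the same members in $X$ are equal, and likewise that an element $s$ of $S$ is determined by its pair of ``traces'' $(s^\down,s^\up)$ in $X$ — these ensure the candidate maps $C\to\mathcal P(X)$ and $S\to\mathcal P(X)^2$ are injective, so that the reduct really embeds into $F(X)$; (ii) the containment axiom $\forall s\,\forall x\,(x\in^\up_S s \rightarrow x\in^\down_S s)$, encoding $s^\up\subseteq s^\down$; and (iii) the action-compatibility axiom
\[
\A{c}\A{s}\A{x}\ \bigl(x \in_C f(c,s) \ \leftrightarrow\ \bigl((x\in_C c \land x\in^\down_S s)\lor x\in^\up_S s\bigr)\bigr),
\]
which says precisely that $f$ computes $(c\cap s^\down)\cup s^\up$ on traces. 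All of these are first-order sentences in the expanded signature.

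It then remains to check the two directions. If $(C,S,X,\dots)\models T'$, the map sending $c\mapsto\{x: x\in_C c\}$ and $s\mapsto(\{x:x\in^\down_S s\},\{x:x\in^\up_S s\})$ is, by the extensionality axioms, an injection, by the containment axiom lands in the $S$-sort of $F(X)$, and by the action-compatibility axiom is a homomorphism; hence it embeds the reduct into $F(X)$, so the reduct is an $\updown$-action. Conversely, any $\updown$-action is by definition (isomorphic to) a subalgebra $A$ of some $F(X)$, and we expand $A$ to a $T'$-model by taking the $X$-sort to be $X$ itself with $\in_C,\in^\down_S,\in^\up_S$ the genuine membership relations inherited from the representation; the axioms of $T'$ hold by construction. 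Thus the class of $\updown$-actions is exactly the class of reducts of models of $T'$, i.e.\ it is pseudo-elementary.

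The only mildly delicate point — and the step I would be most careful about — is making sure the extensionality axioms in (i) are correctly stated so that an arbitrary model of $T'$ has its $C$ and $S$ sorts \emph{injecting} into $\mathcal P(X)$ and $\mathcal P(X)^2$ rather than merely mapping there; without this the reduct need not be a \emph{sub}algebra of $F(X)$. But this is handled by the evident first-order sentences ``$(\forall x)(x\in_C c\leftrightarrow x\in_C c')\rightarrow c=c'$'' and its two-relation analogue for $S$, so there is no real obstacle. Everything else is a routine unwinding of the definition of $F(X)$, and there are no compactness or Löwenheim–Skolem subtleties since pseudo-elementariness only requires producing \emph{some} expansion theory, not controlling cardinalities.
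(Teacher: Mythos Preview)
Your proposal is correct and follows essentially the same approach as the paper: introduce an extra sort for the ambient set, membership relations for $C$ and for the two components of $S$, and write down extensionality, the containment $s^\up\subseteq s^\down$, and the action-compatibility biconditional. The paper's proof is the same argument with only cosmetic differences (naming the extra sort $W$ and reversing the argument order in the membership relations).
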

\begin{proof}
Expand the signature by an additional sort $W$ and additional binary relations $\in\colon W\times C$, $\in^\down\colon W\times S$, and $\in^\up\colon W\times S$. Then let $T$ be the theory which asserts extensionality:
\[
\A{c,d\colon C} ((\A{w\colon W} w\in c\leftrightarrow w\in d)\rightarrow c=d)
\]
and
\[
\A{s,t\colon S}((\A{w\colon W} (w\in^\down s\leftrightarrow w\in^\down t) \wedge (w\in^\up s\leftrightarrow w\in^\up t))\rightarrow s=t),
\]
the subset condition on $S$:
\[
\A{s\colon S}(\A{w\colon W} (w\in^\up s\rightarrow w\in^\down s)),
\]
and the way $S$ acts on $C$:
\[\A{w\colon W}\A{c\colon C}\A{s\colon S} (w\in cs\leftrightarrow((w\in c\wedge w\in^\down s)\vee w\in^\up s)).
\]
Now, every $\updown$-action can clearly be expanded to become a model of $T$. Conversely, given a model of $T$, we may embed its reduct into the full $\updown$-action on $W$ by associating to $c\in C$ the set $\{w\in W\mid w\in c\}$ and to $s\in S$ the pair $(\{w\in W\mid w\in^\down s\},\{w\in W\mid w\in^\up s\})$. This is 1-1 by extensionality and is a homomorphism by the fourth sentence in $T$. So $T$ witnesses that the class of $\updown$-actions is pseudo-elementary.
\end{proof}

It is straightforward to verify that the operation $F$ which takes a set $X$ to the full $\updown$-action on $X$ turns disjoint unions of sets into products of algebras. Thus Proposition~\ref{F(1)} applies and we have:

\begin{corollary}\label{hornax}
The class of $\updown$-actions is axiomatized by the Horn clause theory of $F(1)$.
\end{corollary}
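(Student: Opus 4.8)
The plan is to simply assemble the pieces. Corollary~\ref{hornax} asserts two things about the class $K$ of $\updown$-actions: first, that $K$ is pseudo-elementary (which is exactly Proposition~\ref{pseudo-elementary actions}), and second, that the construction $F$ sending a set $X$ to the full $\updown$-action $F(X)$ satisfies the two hypotheses of Proposition~\ref{F(1)}, namely that bijections of sets induce isomorphisms of the corresponding full actions, and that $F$ turns disjoint unions of sets into products of algebras. Granting these, part~(3) of Proposition~\ref{F(1)} immediately yields that $K$ is elementary and axiomatized by the Horn clause theory of $F(1)$, which is the statement of the corollary.

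So the only work is to verify the two structural properties of $F$. For property~(1): a bijection $\sigma\colon X\to Y$ induces a bijection $\mathcal{P}(X)\to\mathcal{P}(Y)$ on subsets, hence bijections on the $C$-sorts and (applying $\sigma$ coordinatewise to pairs, which preserves the condition $s^\up\subseteq s^\down$) on the $S$-sorts, and one checks directly that the image of $(c\cap s^\down)\cup s^\up$ under $\sigma$ equals $(\sigma(c)\cap\sigma(s^\down))\cup\sigma(s^\up)$, so $F(\sigma)$ is an isomorphism. For property~(2): given a disjoint union $X=\biguplus_{i\in I}X_i$, the map sending a subset $c\subseteq X$ to the tuple $(c\cap X_i)_{i\in I}$ is a bijection $\mathcal{P}(X)\to\prod_i\mathcal{P}(X_i)$, and similarly a pair $(s^\down,s^\up)$ of subsets of $X$ with $s^\up\subseteq s^\down$ corresponds to the tuple $((s^\down\cap X_i,\ s^\up\cap X_i))_{i\in I}$, each component of which again satisfies the subset condition. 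It remains to observe that the action is computed componentwise: $((c\cap s^\down)\cup s^\up)\cap X_i = (c\cap X_i)\cap(s^\down\cap X_i))\cup(s^\up\cap X_i)$ — using that $\cap$ and $\cup$ distribute over the partition — so these bijections assemble into an isomorphism $F(\biguplus_i X_i)\cong\prod_i F(X_i)$.

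There is no real obstacle here; the corollary is a routine application of the machinery set up in Section~\ref{generalities}, and the paper signals as much by calling the verification ``straightforward.'' The only point deserving a moment's care is keeping track of the $S$-sort under the disjoint-union isomorphism: one must check both that the subset condition $s^\up\subseteq s^\down$ is respected when passing to components and back, and that the reassembly of components of $S$-elements is inverse to the decomposition. Both are immediate from the fact that intersecting with the blocks $X_i$ of a partition is a Boolean isomorphism $\mathcal{P}(X)\cong\prod_i\mathcal{P}(X_i)$. Once this is noted, Proposition~\ref{F(1)}(3) does the rest, and the statement about $F(1)$ — the full $\updown$-action on a single point — follows.
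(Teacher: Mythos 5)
Your proof is correct and follows exactly the paper's route: the paper likewise derives the corollary by combining Proposition~\ref{pseudo-elementary actions} with the (stated as ``straightforward to verify'') fact that $F$ turns disjoint unions into products, and then invoking Proposition~\ref{F(1)}(3); you have merely written out that verification explicitly, and done so correctly. (The only blemish is a mismatched parenthesis in your displayed identity for the componentwise action, which should read $((c\cap s^\down)\cup s^\up)\cap X_i = \bigl((c\cap X_i)\cap(s^\down\cap X_i)\bigr)\cup(s^\up\cap X_i)$.)
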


It's worth writing down $F(1)$ explicitly: $F(1) = (C(1),S(1))$, where, naming $0 = \emptyset$, we have $C(1) = \{0, 1\}$ and $S(1) = \{(1, 0), (0,0), (1,1)\}$. On $C$, $(1,0)$ acts as an identity operation, $(0,0)$ as a constant operation with value $0$, and $(1,1)$ as a constant operation with value $1$.

\begin{remark}
The canonical embedding described in Proposition~\ref{F(1)} takes on a particularly nice form for $\updown$-actions. Let $(C,S)$ be an $\updown$-action, and define $H = \text{Hom}((C,S),F(1))$. Then, by examining the composition
\[
(C,S) \hookrightarrow \prod_{H}F(1)\cong F\left(\biguplus_H 1\right) \cong F(H),
\]
we obtain the map
\begin{align*}
c &\mapsto \{f\in H\mid f(c) = 1\}\\
s &\mapsto (\{f\in H\mid f(s) = (1,0)\text{ or }f(s) = (1,1)\},\{f\in H\mid f(s) = (1,1)\}).
\end{align*}
\end{remark}

Now our goal is to characterize the class of $\updown$-actions by conditions which translate to Horn clause axioms.

Recall that an action is \emph{idempotent} if $css = cs$ for all $c\in C$ and $s\in S$. We say an action is \emph{previous redundant} if $csts = cts$ for all $c\in C$ and $s,t\in S$. Further, an action is \emph{fully previous redundant} if $csws = cws$ for $c\in C$, $s\in S$, and $w\in S^*$. Previous redundance is so called because from the point of view of the second $s$, the previous $s$ is redundant and can be removed.

\begin{lemma}\label{fullypr}
Any action which is idempotent and previous redundant is fully previous redundant.
\end{lemma}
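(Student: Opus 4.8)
The plan is to prove the statement by induction on the length of the word $w$, with idempotence handling the base case and previous redundance being used (essentially once) in the inductive step. Precisely, I will show that for every $n$, the identity $csws = cws$ holds whenever $c \in C$, $s \in S$, and $w \in S^*$ has length $n$; this is exactly fully previous redundance once all $n$ are covered. For $n = 0$ the word $w$ is empty, so the claim reads $css = cs$, which is idempotence (I).

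For the inductive step, suppose the claim holds for all words of length $n$, let $w$ have length $n+1$, and write $w = tw'$ with $t \in S$ and $w' \in S^*$ of length $n$. The key idea is to manufacture the length-one gap that previous redundance can act on by \emph{temporarily inserting} an extra copy of $s$, using the induction hypothesis for $w'$ in reverse, and then removing it again at the end by a second application of the induction hypothesis for $w'$. Concretely, I claim
\[
csws = cstw's = cstsw's = ctsw's = ctw's = cws,
\]
where: the first equality just unfolds $w = tw'$; the second applies the induction hypothesis to $w'$ with starting state $cst$ (i.e.\ $cst\cdot sw's = cst\cdot w's$), read from right to left, to insert an $s$; the third applies previous redundance (PR) in the form $c\cdot sts = c\cdot ts$ to the prefix $csts$; the fourth applies the induction hypothesis to $w'$ again, now with starting state $ct$ (i.e.\ $ct\cdot sw's = ct\cdot w's$), to delete the inserted $s$; and the last equality re-folds $tw' = w$. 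Note that the induction hypothesis is used uniformly over all starting states and all $s \in S$, which is legitimate since these are universally quantified in the statement of fully previous redundance.

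The step I expect to be the only non-mechanical one is the initial move of \emph{lengthening} the word — applying the inductive hypothesis backwards to introduce an extra $s$ — in order to expose a $csts$ pattern on which PR can act, and then contracting again. Everything else is bookkeeping: matching up the prefixes so that PR and the induction hypothesis apply to the right sub-words, and keeping careful track of which element plays the role of the ``state'' in each invocation. There is no genuine obstacle beyond recognizing this insert–contract trick; once it is in hand the computation is a five-term chain as displayed.
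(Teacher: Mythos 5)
Your proof is correct and follows essentially the same route as the paper's: induction on the length of $w$, temporarily inserting an extra $s$ so that the $sts$ pattern of previous redundance becomes available, then contracting again. The only cosmetic difference is that you peel the first letter off $w$ (writing $w=tw'$) and do the insertion via the induction hypothesis, whereas the paper peels off the last letter (writing $w=w't$) and does the insertion via previous redundance; both are five-step chains of the same shape.
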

\begin{proof}
By induction on the length of the word $w\in S^*$. The cases when $w$ has length $0$ and $1$ are covered by idempotence and previous redundance.

Now suppose that the length of $w$ is $n+1\geq 2$, and write $w$ as $w't$, where $w'$ is a word of length $n$. Then $csws = (csw')ts = (csw')sts$ by previous redundance. Applying the induction hypothesis to $csw's$, this is equal to $cw'sts = cw'ts = cws$, by another application of previous redundance.
\end{proof}

An \emph{$n$-step link} between $c$ and $d$ is a sequence $c = a_0$, $a_1$, $\dots$, $a_{n-1}$, $a_n = d$ of elements of $C$ and a sequence $w_1,\dots,w_n$ of words in $S^*$ such that for each $i = 1,\dots, n$, $a_{i-1}$ and $a_i$ are fixed points of $w_i$, i.e.\ $a_{i-1}w_i = a_{i-1}$ and $a_iw_i = a_i$. A \emph{strong link} between $c$ and $d$ is an $n$-step link, for some $n\geq 0$, such that additionally $cw_i = dw_i$ for all $i=1,\dots,n$. Every $c\in C$ is trivially strongly linked to itself (by a $0$-step link). A strong link between $c$ and $d$ is \emph{nontrivial} if $c\neq d$.

Note that there is a $1$-step link between any two elements $c$ and $d$, taking $w_1$ to be the empty word (or any identity operation). However, any nontrivial strong link must be at least two steps. Indeed, a $1$-step link between $c$ and $d$ is witnessed by $w\in S^*$ such that $cw = c$ and $dw = d$. But if this link is strong, then $c = cw = dw = d$. Similarly, no identity operation can appear in a nontrivial strong link.

The condition that all strong links are trivial is expressed by infinitely many Horn clauses, obtained by varying the natural number $n$ (the length of the $n$-step link) and the lengths of the sequences of variables $w_i$ of sort $S$ in the schema below.
\[
\Bigg(c = a_0 \wedge d=a_n\wedge \Bigg(\bigwedge_{i=1}^n cw_i=dw_i\wedge a_{i-1}w_i=a_{i-1}\wedge a_{i}w_i=a_{i}\Bigg)\Bigg)\rightarrow (c=d)
\]
We call this the strong links axiom. It is necessary to allow arbitrary words $w_i$ rather than just single elements of $S$ in this axiom, as is shown by Example~\ref{words are needed}.

We can now establish one half of our characterization.

\begin{proposition}\label{check}
The action $F(1)$ is idempotent, previous redundant, and has no nontrivial strong links. By Corollary~\ref{hornax}, these conditions are true in every $\updown$-action.
\end{proposition}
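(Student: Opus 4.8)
The plan is to leverage the extreme simplicity of $F(1)$: its $C$-sort is the two-element set $\{0,1\}$, and each of the three elements of $S(1)$ acts on it either as the identity operation or as a constant operation (namely $(1,0)$ as the identity, $(0,0)$ as the constant $0$, and $(1,1)$ as the constant $1$). The first step I would carry out is to promote this observation from single elements of $S(1)$ to arbitrary words: for every $w\in S(1)^*$, the operation $f_w\colon\{0,1\}\to\{0,1\}$ is either the identity or a constant, by a one-line induction on the length of $w$, using the fact that a composite of identity and constant functions is the identity (when all factors are identities) and otherwise a constant (with the value of the last constant factor).

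From here, idempotence and previous redundance are routine case splits on whether $f_s$ is the identity or a constant. For idempotence, $css=cs$ holds trivially when $f_s$ is the identity and reduces to $es=e$ when $f_s$ is the constant with value $e$. For previous redundance, $csts=cts$ is again immediate when $f_s$ is the identity, and when $f_s$ is constant with value $e$ both sides evaluate to $e$. (Equivalently, writing the relevant operations as composites of $f_s$ and $f_t$, one checks directly that $f_s\circ f_t\circ f_s=f_s\circ f_t$ whenever $f_s$ is the identity or a constant.)

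The one part that takes a little more thought is the absence of nontrivial strong links. I would take an arbitrary strong link $c=a_0,a_1,\dots,a_n=d$ with words $w_1,\dots,w_n$ satisfying $a_{i-1}w_i=a_{i-1}$, $a_iw_i=a_i$, and $cw_i=dw_i$ for all $i$, and argue that $c=d$. If $n=0$ this is immediate. Otherwise, examine any single index $i$ and split on $f_{w_i}$: if $f_{w_i}$ is the identity, then $c=cw_i=dw_i=d$ and we are done at once; if $f_{w_i}$ is a constant with value $v_i$, then $a_{i-1}=a_{i-1}w_i=v_i=a_iw_i=a_i$, so consecutive $a$'s agree. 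Hence either some $w_i$ is an identity operation, giving $c=d$ directly, or every $w_i$ is a constant operation, in which case $c=a_0=a_1=\dots=a_n=d$; in both cases $c=d$, so the only strong links are trivial. There is no genuine obstacle here; the key (and only mildly nonobvious) ingredient is the identity-or-constant dichotomy for word operations in $F(1)$, which forces a strong link either to collapse immediately via the $cw_i=dw_i$ condition or to be constant along the whole chain.

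The final sentence of the proposition is then an immediate application of Corollary~\ref{hornax}: idempotence and previous redundance are equations, and ``all strong links are trivial'' is a schema of Horn clauses, so all of these lie in the Horn clause theory of $F(1)$, which axiomatizes the class of $\updown$-actions; hence they hold in every $\updown$-action.
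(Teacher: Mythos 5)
Your proof is correct and follows essentially the same route as the paper's: everything rests on the observation that each element of $S(1)$ (and hence, as you note, each word) acts on $\{0,1\}$ as an identity or a constant, with idempotence and previous redundance checked by the same case split. The only cosmetic difference is in the strong-links step, where the paper reduces an arbitrary strong link to a $1$-step one (using $|C|=2$) while you case-split directly on whether each $f_{w_i}$ is an identity or a constant; both verifications are valid.
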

\begin{proof}
Idempotence is clear, since each element of $S$ acts as an identity or a constant operation on $C$. To check previous redundance, let $c\in C$, $s,t\in S$. If $s = (1,0)$, then $csts = ct = cts$, since $s$ acts as an identity on $C$. If $s = (0,0)$, then $csts = 0 = cts$, and if $s = (1,1)$, then $csts = 1 = cts$.

To check that all strong links are trivial,
we just need to see that $0$ and $1$ are not strongly linked in $F(1)$. If they were, then in particular there would be a $1$-step strong link between them, but we have already seen that all $1$-step strong links are trivial.
\end{proof}

Next, we pin down the equational theory of $\updown$-actions.

\begin{proposition}\label{pmequations}
The idempotent and previous redundant equations axiomatize the equational theory of $\updown$-actions.
\end{proposition}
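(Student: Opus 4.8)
The plan is to identify the equational theory of the class of $\updown$-actions with that of the single algebra $F(1)$, and then to describe the valid equations explicitly by means of a normal form on words. By Corollary~\ref{hornax}, together with the fact that $F(1)$ is itself a $\updown$-action (the full one on $1$), an equation holds in every $\updown$-action if and only if it holds in $F(1)$. Since $F(1)$ is idempotent and previous redundant by Proposition~\ref{check}, every equation provable from $(\mathrm{I})$ and $(\mathrm{PR})$ holds in $F(1)$, hence in every $\updown$-action; the content is the converse, that every equation valid in $F(1)$ follows from $(\mathrm{I})$ and $(\mathrm{PR})$.

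First I would reduce to a convenient form. The only operation has output sort $C$, so every term of sort $S$ is a variable and every term of sort $C$ has the form $xw$ with $x$ a variable of sort $C$ and $w$ a (possibly empty) word of variables of sort $S$. An equation of sort $S$ is $s=t$ for variables $s,t$, which holds in $F(1)$ only when $s$ and $t$ are the same variable (as $|S(1)|\geq 2$); an equation of sort $C$ is $xw_1 = yw_2$, and evaluating in a $\updown$-action on which every element of $S$ acts as the identity and $|C|\geq 2$ (for instance the subalgebra of $F(1)$ with $S$-part $\{(1,0)\}$) shows that it can hold only when $x$ and $y$ are the same variable. So it suffices to treat equations $xw_1 = xw_2$.

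For a word $w$ of $S$-variables, let $\mathrm{nf}(w)$ be the repetition-free subword obtained by keeping, for each variable occurring in $w$, only its last occurrence, in the induced order. The crux is to show that $(\mathrm{I})$ and $(\mathrm{PR})$ prove $xw = x\,\mathrm{nf}(w)$; I would argue by induction on the length of $w$. If $w$ is repetition-free there is nothing to prove. Otherwise write $w = p\,s\,v$, where $s$ occurs at least twice in $w$, $p$ is the prefix before the first occurrence of $s$ (so $p$ is $s$-free), and $v$ still contains an occurrence of $s$; splitting $v = q_0\,s\,q_1$ at such an occurrence and applying full previous redundance (Lemma~\ref{fullypr}) with $c := xp$, namely $c\,s\,q_0\,s = c\,q_0\,s$, yields $xw = x(p\,v)$. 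The word $p\,v$ is shorter than $w$ and has the same normal form as $w$, since we deleted a non-final occurrence of $s$, which moves the last occurrence of no variable; so the induction hypothesis finishes the step. In particular $(\mathrm{I}),(\mathrm{PR}) \vdash xw_1 = x\,\mathrm{nf}(w_1)$ and $\vdash xw_2 = x\,\mathrm{nf}(w_2)$.

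It remains to show that distinct normal forms are separated by $F(1)$: if $\mathrm{nf}(w_1)\neq\mathrm{nf}(w_2)$, then $xw_1 = xw_2$ fails in $F(1)$, whence any equation $xw_1 = xw_2$ valid in $F(1)$ has $\mathrm{nf}(w_1) = \mathrm{nf}(w_2)$ and so follows from $(\mathrm{I})$ and $(\mathrm{PR})$ by the previous paragraph. Set $u_i = \mathrm{nf}(w_i)$, distinct repetition-free words. In $F(1)$ each element of $S$ acts as $\mathrm{id}$, as the constant $0$, or as the constant $1$, so a word acts as $\mathrm{id}$ if all its letters do and otherwise as the constant determined by its last non-identity letter. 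If some variable occurs in exactly one of $u_1,u_2$, assign it the constant $0$ and every other variable $\mathrm{id}$; then one of $f_{u_1},f_{u_2}$ is the constant $0$ and the other is $\mathrm{id}$. Otherwise $u_1$ and $u_2$ use the same variables in different orders, so two variables $s,s'$ appear in opposite relative orders in $u_1$ and $u_2$; assigning $s$ the constant $0$, $s'$ the constant $1$, and every other variable $\mathrm{id}$ makes $f_{u_1}$ and $f_{u_2}$ the two distinct constants. Either way $F(1)$ falsifies $xu_1 = xu_2$, hence $xw_1 = xw_2$ (using again that $F(1)$ satisfies $xw_i = xu_i$). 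The main obstacle is the normal-form reduction in the third paragraph — getting the decomposition and the bookkeeping on last occurrences right; the final paragraph is a short finite case analysis.
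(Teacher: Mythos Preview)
Your proof is correct and follows essentially the same approach as the paper: reduce to checking equations in $F(1)$, show the $C$-variable must be the same on both sides, use idempotence and (full) previous redundance to pass to repetition-free words, and then separate distinct repetition-free words by a suitable assignment in $F(1)$. The only cosmetic differences are that you package the reduction as an explicit normal form $\mathrm{nf}(w)$ and separate distinct orders via an inversion pair, whereas the paper argues by induction from the rightmost letter; these are interchangeable presentations of the same argument.
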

\begin{proof}
That the $\updown$-actions are idempotent and previous redundant follows from Proposition~\ref{check}.

In the other direction, first note that the only terms in sort $S$ are single variables, and since there are $\updown$-actions in which $|S| > 1$, the only equation in sort $S$ which is universally true on $\updown$-actions is the tautology $s = s$.

So let $cs_1 \cdots s_n=dt_1\cdots t_m$ be some equation in sort $C$ that is universally true in $\updown$-actions. First we note that $c$ must be the same variable as $d$. Otherwise, in $F(1)$, put $c=0$, $d=1$, and put all $S$-variables equal to $(1,0)$. Then the two sides are different. 
 
By repeatedly applying idempotence and previous redundance on each side, we may assume that among the $s_i$ each variable occurs only once, and similarly for the $t_j$. 

Next, we observe that the two sides must have the same $S$-variables and hence the same length. Otherwise, without loss of generality, let $s_i$ be a variable that doesn't occur among the $t_j$. Again in $F(1)$, put $s_i=(0,0)$, put all other $S$-variables equal to $(1,0)$, and put $c=d=1$. Then the two sides are different.

So we are looking at an equation like $cs_1\cdots s_n=ct_1\cdots t_n$. We now show that $s_n=t_n$, then $s_{n-1}=t_{n-1}$, and so on down to $s_1=t_1$. 

If $s_n\neq t_n$, then we could put $s_n=(0,0)$ and $t_n=(1,1)$ and the two sides would be different. By induction, assume $s_i=t_i$ for $i>k$, and suppose for contradiction that $s_k\neq t_k$. We can put $s_i=t_i=(1,0)$ for $i>k$ and put $s_k=(0,0)$ and $t_k=(1,1)$. Then $cs_1\cdots s_n=0\neq 1 =ct_1\cdots t_m$.

Hence the equation $cs_1\cdots s_n = dt_1\cdots t_n$ is a tautology, from which the original equation follows by applications of idempotence and previous redundance.  
\end{proof}

Unlike actions expressed using set intersection (Theorem~\ref{intersection}), the class of $\updown$-actions does not have an equational axiomatization. This is demonstrated by the following example, which shows that the condition that all strong links are trivial does not follow from the equational theory.

\begin{example}
\label{example}
Let $C=\{c,d,e\}$, let $S=\{s,t\}$, and put $cs=ds = c$, $ct = dt=d$, and $es=et=e$. Letting $a_0=c$, $a_1=e$, $a_2=d$, and $w_1=s$, $w_2=t$ we get a $2$-step link between $c$ and $d$, and in fact this is a nontrivial strong link, since $cs=ds$ and $ct=dt$. Hence $(C,S)$ is not an $\updown$-action.

To see that this action is fully previous redundant, consider the equation $auwu = awu$ with $a\in C$, $u\in S$, and $w\in S^*$. If $a = e$, then both sides are $e$. Otherwise, both sides are $c$ or $d$ in accordance with whether $u$ is $t$ or $s$. 
\end{example}

In the proof of Theorem~\ref{main theorem}, we will use two auxiliary actions, $(C,S^*)$, and $(\overline{C},S)$, constructed from an action $(C,S)$.

Recall that $S^*$ is the set of words in $S$. Note that there is a natural action of $S^*$ on $C$, and that the action $(C,S)$ embeds into the action $(C,S^*)$.

\begin{lemma}\label{star} If $(C,S)$ is an idempotent and previous redundant action in which all strong links are trivial, then so is $(C,S^*)$.
\end{lemma}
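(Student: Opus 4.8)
The plan is to verify the three required properties of $(C,S^*)$ — idempotence, previous redundance, and triviality of all strong links — in turn. The one substantive ingredient is Lemma~\ref{fullypr}: since $(C,S)$ is idempotent and previous redundant it is \emph{fully} previous redundant, so $csws = cws$ for all $c\in C$, $s\in S$, and $w\in S^*$. This is what lets us handle the word (rather than single-letter) operators that make up the operator sort of $(C,S^*)$.

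For idempotence and previous redundance of $(C,S^*)$, note that previous redundance of $(C,S^*)$ is precisely the statement that $cuvu = cvu$ for all $c\in C$ and all $u,v\in S^*$, and that idempotence of $(C,S^*)$, namely $cuu = cu$, is the special case $v = \varepsilon$. I would prove the claim $cuvu = cvu$ by induction on the length of $u$, the case $u = \varepsilon$ being trivial. If $u = su'$ with $s\in S$, then regrouping gives $cuvu = cs(u'v)su'$; applying fully previous redundance to the prefix $cs(u'v)s$ deletes the leading $s$, yielding $c(u'v)su' = cu'(vs)u'$; and the induction hypothesis applied to the shorter word $u'$ (with $vs\in S^*$ playing the role of the middle word) gives $cu'(vs)u' = c(vs)u' = cvu$. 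The only thing to keep track of is which occurrences of the letters of $u$ are being deleted, so there is no real difficulty here.

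For the triviality of strong links in $(C,S^*)$, the point is that a strong link already allows arbitrary words in the operator sort, so the notion is essentially unchanged when $S$ is replaced by $S^*$. Explicitly, suppose $c = a_0, a_1,\dots,a_n = d$ in $C$ together with operators $v_1,\dots,v_n$ of $(C,S^*)$ — each $v_i$ being a finite sequence of elements of $S^*$ — witness a strong link between $c$ and $d$ in $(C,S^*)$. Flattening each $v_i$ (concatenating its entries) produces a word $\bar v_i\in S^*$ whose action on $C$ agrees with that of $v_i$. Then $a_{i-1}\bar v_i = a_{i-1}$, $a_i\bar v_i = a_i$, and $c\bar v_i = d\bar v_i$ for each $i$, so $a_0,\dots,a_n$ together with $\bar v_1,\dots,\bar v_n$ form a strong link between $c$ and $d$ in $(C,S)$; since $(C,S)$ has no nontrivial strong links, $c = d$, and the original link was trivial. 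Thus the strong-links part requires essentially no work, and the only computation — the short induction for previous redundance — presents no genuine obstacle.
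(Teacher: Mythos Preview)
Your proposal is correct and follows essentially the same approach as the paper. The paper's proof is terser---it simply states that $cwxw = cxw$ follows ``by $n$ applications of full previous redundance in $(C,S)$, where $n$ is the length of the word $w$,'' which is exactly the induction you spell out---and handles the strong-links part by the same flattening of $(S^*)^*$-words to $S^*$-words that you describe.
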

\begin{proof}
Any word $w\in (S^*)^*$ is equivalent to a word $w'\in S^*$. Then any pair of elements in $C$ which are strongly linked in $(C,S^*)$ are also strongly linked in $(C,S)$, and hence all strong links are trivial in $(C,S^*)$.

For the other axioms, we show that $(C,S^*)$ is fully previous redundant. If $c\in C$, $w,x\in S^*$, then $cwxw = cxw$ by $n$ applications of full previous redundance in $(C,S)$, where $n$ is the length of the word $w$. 
\end{proof}

Define a binary relation $\mathbf{\sim}$ on $C$ by $c \sim d$ if and only if there exists $s\in S$ such that $s$ is not an identity operation and $c = cs$ and $d = ds$. When the action is idempotent, this is equivalent to putting $c\sim d$ when both $c$ and $d$ are in the image of a common non-identity operation. $\sim$ is a symmetric relation, so its reflexive and transitive closure $\approx$ is an equivalence relation. Explicitly, we have $c\approx d$ if and only if for some $n \geq 0$ there exist $a_0,\dots,a_{n}\in C$ and non-identity operations $s_1,\dots,s_n\in S$ such that $c=a_0$, $d=a_{n}$, $a_{i-1}s_i=a_{i-1}$, and $a_{i}s_i=a_{i}$ for $i=1,\ldots,n$. Let $\overline{C} = C/\approx$.

This definition is very similar to the definition of an $n$-step link, but here we require the witnesses $s_i$ to be in $S$, not $S^*$, and we exclude identity operations.  

\begin{lemma} 
\label{congruence actions}
For any fully previous redundant action $(C,S)$, $\approx$ is a congruence on $C$, i.e.\ $(\overline{C},S)$ inherits the structure of an action. Moreover, $(\overline{C},S)$ is an $\updown$-action.
\end{lemma}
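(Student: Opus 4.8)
The plan is to prove the two assertions in turn, using only idempotence (a consequence of fully previous redundance) for either. For the first assertion --- that $\approx$ is a congruence, equivalently that $\bar c\cdot s:=\overline{cs}$ is well defined --- it suffices to check that $c\approx d$ implies $cs\approx ds$ for every $s\in S$. I would split on whether $s$ is an identity operation. If it is, then $cs=c\approx d=ds$. If it is not, then by idempotence $(cs)s=css=cs$ and $(ds)s=ds$, so $cs$ and $ds$ are both fixed points of the non-identity operation $s$; hence $cs\sim ds$ directly from the definition of $\sim$, and so $cs\approx ds$.

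For the second assertion, the key structural fact I would isolate first is that \emph{in $(\overline C,S)$ every operation is an identity operation or a constant operation}. An identity operation on $C$ clearly descends to one on $\overline C$; and if $s\in S$ is not an identity operation on $C$, then by idempotence the image of $s$ in $C$ is contained in its set of fixed points, any two of which are $\sim$-related and hence lie in a single $\approx$-class, so the image of $s$ in $\overline C$ is a single element --- that is, $s$ acts on $\overline C$ as a constant operation.

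With this in hand, it remains to prove the general statement that any action $A$ all of whose operations are identity or constant operations is a $\updown$-action, for which I would appeal to Section~\ref{generalities}. Since $F(1)$ is a full $\updown$-action, Proposition~\ref{F(1)}(1) shows that the class of $\updown$-actions is closed under products and substructures, so by the argument in the proof of Proposition~\ref{F(1)}(2) it is enough to check that any two distinct elements lying in a common sort of $A$ are separated by some homomorphism $A\to F(1)$; the resulting map from $A$ into the product of copies of $F(1)$ indexed by $\mathrm{Hom}(A,F(1))$ is then an embedding into a power of $F(1)$, hence into a full $\updown$-action. The homomorphisms $A\to F(1)$ are transparent: any function $h_C\colon C\to\{0,1\}$ extends to a homomorphism, with $h_S(s)=(1,0)$ when $s$ is an identity operation and $h_S(s)=(h_C(v),h_C(v))$ when $s$ is a constant operation with value $v$, provided $h_C$ is nonconstant, while if $h_C$ is constant there is extra latitude in the choice of each $h_S(s)$. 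Distinct elements $c\ne d$ of the $C$-sort are separated by the homomorphism extending the indicator function of $\{c\}$, and distinct elements of the $S$-sort are separated by such an indicator homomorphism unless they act as the same constant operation (or both as identity operations), in which case a homomorphism with $h_C$ constant --- which may send them to $(1,0)$ and $(0,0)$ respectively --- does the job.

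The first two steps are routine; the step that requires care is the last one. Because a $\updown$-action need not act faithfully by its $S$-sort, two distinct elements of $S$ that collapse to the same constant operation on $\overline C$ must nevertheless be separated in the embedding, and it is exactly this that forces the use of homomorphisms to $F(1)$ with constant $C$-component --- or, equivalently, an explicit embedding of $A$ into $F(C\uplus S)$ that uses the elements of $S$ as tags on the $S$-sort to preserve injectivity.
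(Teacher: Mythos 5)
Your proof is correct, and its first two steps coincide with the paper's: the congruence check by splitting on whether $s$ is an identity operation (with $cs\sim ds$ witnessed by idempotence in the non-identity case), and the observation that every $s\in S$ acts on $\overline{C}$ as an identity or a constant operation. The final step is where you diverge. The paper finishes with a single explicit embedding $\psi\colon(\overline{C},S)\to F(\overline{C}\uplus S)$, sending $c$ to $\{c\}$, an identity operation $s$ to $(\overline{C}\cup\{s\},\emptyset)$, and a constant operation $s$ with value $d$ to $(\{d,s\},\{d\})$, using the element $s$ itself as a tag to keep the map injective on $S$. You instead separate pairs of distinct elements by homomorphisms to $F(1)$ and invoke Proposition~\ref{F(1)} to assemble these into an embedding of $(\overline{C},S)$ into a power of $F(1)$, i.e.\ into a full $\updown$-action; your description of the available homomorphisms to $F(1)$ is accurate, and you correctly isolate the one delicate point --- separating two elements of $S$ that induce the same operation on $\overline{C}$ --- which you handle with a constant $C$-component sending them to $(1,0)$ and $(0,0)$, exactly the role played by the dummy tag in the paper's $\psi$. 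Both routes are valid; the paper's is more self-contained and concrete, while yours reuses the general machinery of Section~\ref{generalities} and is closer in spirit to how the separation argument is deployed in the proof of Theorem~\ref{main theorem}. You even note the equivalence with the explicit $F(\overline{C}\uplus S)$ embedding at the end, so the two proofs are really the same argument packaged differently.
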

\begin{proof}
We must check that for all $c,d\in C$ and $s\in S$, if $c\approx d$, then $cs \approx ds$. If $s$ is an identity operation, then $cs = c \approx d = ds$. If $s$ is not an identity operation, then in fact $cs\sim ds$ by idempotence. 

To show that $(\overline{C},S)$ is an $\updown$-action, we embed it in an $\updown$-action. Note that for all $s\in S$, $s$ is either an identity operation or a constant operation on $\overline{C}$. Indeed, if $s$ is an identity operation on $C$, then the same is true on $\overline{C}$. If $s$ is not an identity operation on $C$, then for all $a,b\in C$, $as \sim bs$ by idempotence, so $as = bs$ in $\overline{C}$, and $s$ is a constant operation on $\overline{C}$.

We define an embedding $\psi\colon (\overline{C},S)\to F(\overline{C}\biguplus S)$ as follows: 
\begin{align*}
c &\mapsto \{c\}\text{ if }c\in \overline{C}\\
s &\mapsto \begin{cases}(\overline{C}\cup\{s\},\emptyset) \text{ if } s\in S\text{ and } s \text{ is an identity operation}\\
(\{d,s\},\{d\}) \text{ if } s\in S\text{ and } s\text{ is a constant operation with value } d\end{cases}
\end{align*}

This map is clearly injective on $\overline{C}$, and the dummy element $s$ is included in $\psi(s)$ for all $s$ to ensure that it is injective on $S$.

Now if $c\in \overline{C}$ and $s\in S$ is an identity operation, then $\psi(c)\psi(s) = (\{c\}\cap (\overline{C}\cup\{s\}))\cup \emptyset = \{c\} = \psi(c) = \psi(cs)$. If $s\in S$ is a  constant operation with value $d$, then $\psi(c)\psi(s) = (\{c\}\cap \{d,s\}) \cup \{d\} = \{d\} = \psi(d) = \psi(cs)$.
\end{proof}

\begin{theorem}
\label{main theorem}
An action is an $\updown$-action if and only if it is idempotent and previous redundant and all strong links are trivial.
\end{theorem}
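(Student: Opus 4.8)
The ``only if'' direction is exactly Proposition~\ref{check}. For the converse, suppose $(C,S)$ is idempotent and previous redundant with only trivial strong links; by Lemma~\ref{fullypr} it is also fully previous redundant. The conditions (I), (PR), and the strong links schema together form a universal theory $T$, and the class $K$ of $\updown$-actions is pseudo-elementary (Proposition~\ref{pseudo-elementary actions}) and closed under substructure, so Proposition~\ref{fg} lets us assume $(C,S)$ is finitely generated. Since the signature has no operations with values of sort $S$, such an action has $S$ finite; and since every word over $S$ is equivalent, modulo idempotence and previous redundance, to one in which no letter repeats, $C$ is finite as well. So it suffices to treat finite actions.

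Because $F$ turns disjoint unions of sets into products of algebras, every power of $F(1)$ is (isomorphic to) a full $\updown$-action; hence, by Proposition~\ref{F(1)}, it is enough to show that homomorphisms $(C,S)\to F(1)$ separate points of $(C,S)$ in each sort, so that the canonical map $(C,S)\hookrightarrow\prod_{\mathrm{Hom}((C,S),F(1))}F(1)\cong F\big(\mathrm{Hom}((C,S),F(1))\big)$ is an embedding. A homomorphism to $F(1)$ is the same datum as a subset $U\subseteq C$ that is \emph{admissible}: for every $s\in S$ one of (i) $es\in U\Leftrightarrow e\in U$ for all $e$, (ii) no fixed point of $s$ lies in $U$, (iii) every fixed point of $s$ lies in $U$ holds (these correspond to labelling $s$ by the identity $(1,0)$, the constant $(0,0)$, or the constant $(1,1)$, and we use idempotence to identify the image of $f_s$ with its fixed-point set). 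Separating two distinct elements of $S$ is free: the map collapsing all of $C$ onto $0$, sending one designated $s$ to $(0,0)$ and every other element of $S$ to $(1,0)$, is a homomorphism separating that $s$ from the rest. So the whole problem comes down to this: given $c\ne d$ in $C$, find an admissible $U$ containing exactly one of them.

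Here I would split on the equivalence relation $\approx$ of Lemma~\ref{congruence actions}. If $c\not\approx d$, then $c$ and $d$ have distinct images in the $\updown$-action $(\overline C,S)$, so some homomorphism $(\overline C,S)\to F(1)$ separates them, and precomposing with the quotient map gives the desired homomorphism — equivalently, an admissible $U$ that is a union of $\approx$-classes. The serious case is $c\approx d$ with $c\ne d$, where the admissible set must split the $\approx$-class of $c$ in two.

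For that case the plan is to build the indicator function of $U$ as a $2$-colouring of $C$: colour $c$ and $d$ oppositely and propagate, exploiting that colouring any single fixed point of a non-identity operation $s$ forces all fixed points of $s$ to that colour — unless $s$ is committed to the identity label, which instead forces $e$ and $es$ to the same colour for every $e$. The two basic propagation moves are precisely the two observations of Section~\ref{intuition}: from $cs=ds$ one gets agreement away from the ``material of $s$'', and from $cs=c$, $ds=d$ one gets agreement on it. Lemma~\ref{star} is used so that the operations driving the propagation may be arbitrary words, not just single elements of $S$ (which is also why the strong links schema must allow the $w_i$ to be words; cf.\ Example~\ref{words are needed}). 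The crux — and the only place the strong links hypothesis is used — is to show that a conflict arising in this propagation yields a nontrivial strong link between $c$ and $d$, contradicting the hypothesis; Example~\ref{example} is the prototype of such a conflict. This conflict analysis is the main obstacle; everything else is bookkeeping assembled from Section~\ref{generalities} together with Lemmas~\ref{fullypr},~\ref{star}, and~\ref{congruence actions}.
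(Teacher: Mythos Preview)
Your setup is sound: the reduction to finite actions, the framing via separating homomorphisms to $F(1)$, the trivial separation in the $S$-sort, and the $c\not\approx d$ case via the quotient $(\overline{C},S)$ are all correct and match the paper.

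The gap is your ``serious case'' $c\approx d$, $c\ne d$. You outline a colouring/propagation scheme but explicitly flag the conflict analysis as ``the main obstacle'' and do not carry it out. This is precisely the heart of the theorem, and as stated your scheme is not obviously well-defined: the decision of which $s$ receive the identity label $(1,0)$ versus a constant label is entangled with the colouring you are trying to build, and the two propagation moves you describe interact in ways you have not controlled. You would need to show that any minimal conflict traces back to data $c=a_0,\dots,a_n=d$ and words $w_1,\dots,w_n$ satisfying both the fixed-point conditions and $cw_i=dw_i$; this is plausible but is exactly the nontrivial combinatorics you have left undone.

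The paper avoids this case entirely by a different split together with induction on $|C|$. Given $c\ne d$, it asks whether some \emph{non-identity} $t\in S$ has $ct\ne dt$. If so, the map $c'\mapsto c't$, $s\mapsto st$ is a homomorphism into $(C,S^*)$ (by previous redundance) whose image $(Ct,St)$ still satisfies the axioms (Lemma~\ref{star}) and has $|Ct|<|C|$; by induction it is an $\updown$-action separating $c$ and $d$. If not, then every non-identity $t$ has $ct=dt$; now any chain witnessing $c\approx d$ (which uses only non-identity $s_i$) is automatically a \emph{strong} link, contradicting the hypothesis, so $c\not\approx d$ and the quotient handles it. Thus the paper never has to produce a separating map inside a single $\approx$-class: the induction on $|C|$ carries that weight, and the strong links axiom is invoked only to certify $c\not\approx d$ in Case~2.
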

\begin{proof}
We established in Proposition~\ref{check} that all $\updown$-actions are idempotent and previous redundant and have no nontrivial strong links. It remains to show the converse.

By Propositions~\ref{pseudo-elementary actions} and~\ref{fg}, it suffices to consider finitely generated actions. But any finitely generated fully previous redundant action is actually finite, because any term in the generators is equivalent to one in which no generator appears more than once. We may thus proceed by induction on $|C|$.

Our plan is to embed $(C,S)$ into a product of $\updown$-actions, from which it follows by Proposition~\ref{F(1)} that it is an $\updown$-action. To do this, we observe that if, for every pair of distinct elements in the same sort of $(C,S)$, there is a homomorphism to some $\updown$-action separating these elements, then the product of all these maps is an injective map to the product of these $\updown$-actions.

To separate elements of the $S$ sort, define a map $\vphi\colon (C,S)\to F(S)$ by $c \mapsto \emptyset$ for all $c\in C$ and $s \mapsto (\{s\},\emptyset)$ for all $s\in S$. Then for all $c\in C$ and $s\in S$, $\vphi(c)\vphi(s) = \emptyset = \vphi(cs)$, so $\vphi$ is a homomorphism, and $\vphi$ is injective on $S$. 

In the base case, when $|C|=1$, the map described above is injective on all of $(C,S)$, and we're done. So let $|C|>1$ and let $c\neq d$ in $C$ be two elements to separate.

\emph{Case 1:} There exists $t\in S$ such that $ct \neq dt$, and $t$ is not an identity operation.

We define a map $\vphi\colon (C,S)\rightarrow (C,S^*)$ by $c\mapsto ct$ for $c\in C$ and $s\mapsto st$ for $s\in S$. This is a homomorphism, since for all  $c\in C$ and $s\in S$, $\vphi(c)\vphi(s) = ctst = cst = \vphi(cs)$ by previous redundance. Since $ct \neq dt$, $\vphi(c) \neq \vphi(d)$. 

By Lemma~\ref{star}, $(C,S^*)$ is a previous redundant action in which all strong links are trivial, and the image of $\vphi$ is a subalgebra $(Ct,St)\subseteq (C,S^*)$, so the same is true of $(Ct,St)$.

We will show that $|Ct| < |C|$. Then we will be done with this case since by induction $(Ct,St)$ will be an $\updown$-action. By definition $Ct\subseteq C$. Suppose for contradiction it were all of $C$. Then for all $c\in C$, $c = dt$ for some $d\in C$, so $ct = dtt = dt = c$, and $t$ is an identity operation on $C$, contradiction.

\emph{Case 2:} For all $t\in S$, either $ct = dt$, or $t$ is an identity operation.

By Lemma~\ref{congruence actions}, the quotient map $q\colon (C,S)\to(\overline{C},S)$ is a homomorphism to an $\updown$-action. We'll be done if we show that $q$ separates $c$ and $d$, i.e.\ that $c\not\approx d$.

Suppose for contradiction that $c\approx d$. This is witnessed by sequences $c = a_0,a_1,\dots,a_n = d$ in $C$ and $s_1,\dots,s_n$ in $S$ such that for all $i$, $a_{i-1}s_i = a_{i-1}$, $a_is_i = a_i$, and $s_i$ is not an identity operation. But then $cs_i = ds_i$, so this data would also witness  that $c$ and $d$ are strongly linked, contradicting the assumption that $(C,S)$ has no nontrivial strong links.
\end{proof}

We conclude this section by considering the question of what changes if, in the definition of the full $\updown$-action, the requirement that $s^\up\subseteq s^\down$ is dropped. Formally, we have a new construction $F'$ of actions from sets, defined by $F'(X) = (C',S')$ where 
\begin{align*}
C' &= \{c\mid c\subseteq X\}\\
S' &= \{(s^\down,s^\up)\mid s^\down,s^\up\subseteq X\}\\
f(c,(s^\down,s^\up)) &= (c\cap s^\down) \cup s^\up.
\end{align*}

Say an action is an $\updown'$-action if it is isomorphic to a subalgebra of $F'(X)$ for some set $X$. It is easy to check once again that the class of $\updown'$-actions is pseudo-elementary and that $F'$ turns disjoint unions of sets into products of algebras, so Proposition~\ref{F(1)} applies.

Intuitively, if an element $x$ is in $s^\up$, it doesn't matter whether it is in $s^\down$: if intersection with $s^\down$ removes it, it will just get added in again by union with $s^\up$. So in moving from $F(X)$ to $F'(X)$, we haven't made a substantial change; we have only added some extra elements of the $S$ sort of $F'(X)$ which have the same action on $C$ as elements that were already in $F(X)$. The following proposition makes this precise.

\begin{proposition}\label{other classes}
Every $\updown'$-action is an $\updown$-action, and vice versa.
\end{proposition}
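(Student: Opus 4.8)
The plan is to prove the two inclusions separately. The inclusion of $\updown$-actions among $\updown'$-actions is essentially definitional: for every set $X$, the full $\updown$-action $F(X)$ is a subalgebra of $F'(X)$, since the two have the same $C$-sort, the $S$-sort of $F(X)$ is exactly the subset $\{(s^\down,s^\up)\mid s^\up\subseteq s^\down\}$ of the $S$-sort of $F'(X)$, and the action function is given by the same formula. Hence any subalgebra of a full $\updown$-action is a subalgebra of a full $\updown'$-action.

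For the converse, the key reduction is that it suffices to check that the single algebra $F'(1)$ is an $\updown$-action. Indeed, as noted just before the statement, $F'$ turns disjoint unions of sets into products of algebras, so Proposition~\ref{F(1)} applies to $F'$: every $\updown'$-action $A$ embeds into a product $\prod_I F'(1)$ of copies of $F'(1)$. If $F'(1)$ embeds into some full $\updown$-action $F(Z)$, then $\prod_I F'(1)$ embeds into $\prod_I F(Z)\cong F(\biguplus_I Z)$, so $A$ embeds into a full $\updown$-action and is therefore an $\updown$-action. (Equivalently, both classes are quasivarieties, axiomatized by the Horn clause theory of $F(1)$ and of $F'(1)$ respectively; since $F(1)$ is a subalgebra of $F'(1)$ it satisfies the former's generating theory automatically, and it remains only to see that $F'(1)$ satisfies the latter, i.e.\ that $F'(1)$ is an $\updown$-action.)

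To verify that $F'(1)$ is an $\updown$-action I would invoke Theorem~\ref{main theorem} rather than build an embedding by hand. Writing $C(1)=\{0,1\}$ with $0=\emptyset$, the four elements of the $S$-sort of $F'(1)$ are $(1,0)$, acting as the identity, $(0,0)$, acting as the constant $0$, and $(1,1)$ and $(0,1)$, which both act as the constant $1$. Since every element of $S$ acts as an identity or constant operation, idempotence and previous redundance follow exactly as in the proof of Proposition~\ref{check}. For the strong links condition, it is enough to see that $0$ and $1$ are not nontrivially strongly linked; but any $w\in S^*$ with $0w=1w$ is a constant operation on the two-element set $C(1)$, and a constant operation has a unique fixed point, so any two consecutive terms of a purported link through such a $w$ must coincide, collapsing the link. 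Hence $F'(1)$ meets all three hypotheses of Theorem~\ref{main theorem}. (Alternatively, one can give a direct re-encoding: embed a $\updown'$-action, realized as a subalgebra $(C,S)$ of $F'(X)$, into $F(X\uplus S)$ by $c\mapsto c$ and $(s^\down,s^\up)\mapsto(s^\down\cup s^\up\cup\{s\},\,s^\up)$, where the tag $\{s\}$ restores injectivity on the $S$-sort and the fact that $c\subseteq X$ is disjoint from the tags makes this a homomorphism.)

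There is no real obstacle here; the only point requiring care is getting the reduction the right way around — one needs $F'(1)$ itself to be an $\updown$-action, which is the substantive direction, while the reverse containment is handled for free by $F(1)\hookrightarrow F'(1)$ — after which Theorem~\ref{main theorem} does the work.
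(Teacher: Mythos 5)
Your proof is correct, and its overall skeleton --- reduce via Proposition~\ref{F(1)} to comparing the two generators, observe that $F(1)\hookrightarrow F'(1)$ handles one direction for free, and then show $F'(1)$ is an $\updown$-action --- is exactly the paper's. The only divergence is in the last step: the paper simply writes down an explicit embedding $F'(1)\to F(1\cup\{x\})$, sending $(0,1)\mapsto(1\cup\{x\},1)$ and fixing everything else, using the dummy element $x$ to separate $(0,1)$ from $(1,1)$ (which act identically on $C$). You instead verify idempotence, previous redundance, and triviality of strong links for $F'(1)$ and invoke Theorem~\ref{main theorem}; this is valid (and non-circular, since the theorem precedes the proposition), and your strong-links check --- that $0w=1w$ forces $w$ to be a constant operation with a unique fixed point, collapsing any link --- is sound, but it routes a four-line observation through the hardest result in the section. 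Your parenthetical re-encoding $(s^\down,s^\up)\mapsto(s^\down\cup s^\up\cup\{s\},\,s^\up)$ into $F(X\uplus S)$ is essentially the paper's embedding generalized from $X=1$ to arbitrary $X$, and on its own it gives a direct proof that bypasses the $F(1)$ reduction entirely; either of your two finishes is fine.
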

\begin{proof}
By Proposition~\ref{F(1)}, the $\updown$-actions and $\updown'$-actions are the classes of structures generated under product and substructure by $F(1)$ and $F'(1)$, respectively, so it suffices to show that $F(1)$ is an $\updown'$-action and $F'(1)$ is an $\updown$-action. 

We have $F(1) = (C,S)$ and $F'(1) = (C',S')$, where
\begin{align*} 
C &= C' = \{0,1\}\\
S &= \{(1,0), (0,0), (1,1)\}\\
S' &= \{(1,0), (0,0), (1,1), (0,1)\}
\end{align*}

Now clearly $F(1)$ is an $\updown'$-action, since it embeds in $F'(1)$. In the other direction, since $(0,1)$ and $(1,1)$ act on $C$ in the same way, we can embed $F'(1)$ into an $\updown$-action in a way that separates them with a dummy element $x$. Define a map $F'(1)\to F(1\cup \{x\})$ which is the identity on $C$ and acts as follows on S':
\begin{align*}
(1,0)&\mapsto (1,0)\\
(0,0)&\mapsto (0,0)\\
(1,1)&\mapsto (1,1)\\
(0,1)&\mapsto (1\cup\{x\},1).\qedhere
\end{align*}
\end{proof}

\section{$\updown$-biactions}
\label{biactions}

A \emph{biaction} $(C,S^\down,S^\up)$ is a pair of functions $f\colon C\times S^\down\to C$ and $g\colon C\times S^\up\to C$. We write $f(c,s)$ as $cs$ and $g(c,t)$ as $ct$.

Given a set $X$, we form a biaction $F(X)$ called the \emph{full $\updown$-biaction on $X$} by setting $C=S^\down=S^\up=\mathcal{P}(X)$, and for $c\in C$, $s\in S^\down$, and $t\in S^\up$, we put $cs=c\cap s$ and $ct=c\cup t$.

A biaction is an \emph{$\updown$-biaction} if it is isomorphic to a subalgebra of the full $\updown$-biaction on some set $X$. In other words, an $\updown$-biaction is a biaction where the elements of $C$, $S^\down$, and $S^\up$ can be identified with sets in such a way that $cs=c\cap s$ when $s\in S^\down$ and $ct=c\cup t$ when $t\in S^\up$. 

Every $\updown$-biaction gives rise to an $\updown$-action by combining $S^\down$ and $S^\up$ into one sort. Formally, if $(C,S^\down,S^\up)$ is a subalgebra of the full $\updown$-biaction on $X$, we can identify the element $s\in S^\down$ with $(s,\emptyset)$ and $t\in S^\up$ with $(X,t)$ in the full $\updown$-action on $X$. However, we can not in general go the other direction. That is, given an $\updown$-action $(C,S)$ we can not in general divide $S$ into two parts $S^\down$ and $S^\up$ so as to have an $\updown$-biaction (see Example~\ref{separating sorts}). In this sense there are more $\updown$-actions than $\updown$-biactions.

We now present axioms for $\updown$-biactions. First we note that $\updown$-biactions are commutative in both the $S^\down$ and $S^\up$ sorts in the sense that $cst=cts$ whenever $s$ and $t$ are both in $S^\down$ or both in $S^\up$. This is obvious from the definition of $\updown$-biactions because intersection and union are associative and commutative. Of course, elements of $S^\down$ do not commute with elements of $S^\up$ in general.

Next we note that $\updown$-biactions are idempotent and previous redundant. That is, for all $c\in C$, $s\in S^\down$ and $t\in S^\up$, we have $css = cs$, $ctt = ct$, $csts = cts$, and $ctst = cst$. This is because the action obtained by combining $S^\down$ and $S^\up$ into one sort is an $\updown$-action, and we've already observed that $\updown$-actions are idempotent and previous redundant. 

We have only stated previous redundance for variables $s$ and $t$ of different sorts. This is because if $s$ and $t$ are in the same sort, $csts = cts$ follows from commutativity and idempotence. Just as in Lemma~\ref{fullypr}, idempotence and previous redundance are enough to imply full previous redundance: for all $c\in C$, $s\in (S^\down\cup S^\up)$, and $w\in (S^\down\cup S^\up)^*$, $csws = cws$.

We have already introduced enough axioms to describe the equational theory of $\updown$-biactions.

\begin{proposition}\label{biequations}
The equations expressing idempotence, previous redundance, and commutativity in the sorts $S^\down$ and $S^\up$ axiomatize the equational theory of $\updown$-biactions.
\end{proposition}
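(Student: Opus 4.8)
The plan is to follow the same two-directional strategy already used for Proposition~\ref{pmequations}. One direction is immediate: we have already observed (in the discussion preceding the proposition) that every $\updown$-biaction is idempotent, previous redundant, and commutative in each of $S^\down$ and $S^\up$, so every equation in these three schemas holds universally in the class. It remains to show that any equation in the signature of biactions which holds in every $\updown$-biaction is a formal consequence of (I), (PR), and (C).

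First I would dispose of equations not in sort $C$. The only terms of sort $S^\down$ are single variables, and likewise for $S^\up$, so (as in Proposition~\ref{pmequations}) the only universally valid equations in these sorts are the tautologies $s=s$ and $t=t$, since there are $\updown$-biactions with more than one element in each of these sorts. So suppose $c w = d w'$ is an equation of sort $C$ valid in all $\updown$-biactions, where $w, w'$ are words over $S^\down \cup S^\up$. Arguing in the full $\updown$-biaction $F(1)$ exactly as before --- setting $c = \emptyset$, $d = 1$, and every $S^\down$-variable to $1$ and every $S^\up$-variable to $\emptyset$ --- forces $c$ and $d$ to be the same variable.

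Now the key step: using commutativity and full previous redundance, I want to reduce both $w$ and $w'$ to a canonical normal form and show the normal forms must coincide. Full previous redundance lets us delete any non-final occurrence of a variable that occurs again later; commutativity in each sort lets us reorder adjacent variables of the same sort. Using these, any word is equivalent to one consisting of \emph{blocks}, where consecutive variables of the same sort are grouped and sorted, no variable appears twice, and consecutive blocks alternate between the two sorts (a block of $S^\down$-variables, then a block of $S^\up$-variables, and so on); moreover within a maximal alternating run one can push each variable as late as its sort-block allows. The claim is that this normal form is unique up to the ordering inside blocks, and that two words with distinct normal forms can be separated in some $F(X)$. For the separation, given the last block where the two normal forms differ (say some $S^\down$-variable $s$ appears in $w$'s normal form but not, or later, in $w'$'s), evaluate in $F(X)$ with $X$ large enough, assigning $s$ a set that removes a distinguished point $x$ while every later variable and the starting state $c=d$ either preserve or restore the other points appropriately --- the standard ``$(0,0)$ vs.\ $(1,1)$'' trick from Proposition~\ref{pmequations}, now carried out in the biaction setting where $S^\down$-variables can only shrink and $S^\up$-variables can only grow.

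The main obstacle I expect is verifying that the normal form really is canonical, i.e.\ that every pair of distinct normal-form words can genuinely be separated in some full $\updown$-biaction. The subtlety is that, unlike the $\updown$-action case, here a variable's sort constrains which direction it can move the state, so the separating evaluation must be chosen compatibly with the block structure; one must check that in the alternating-block normal form, the \emph{last} occurrence of each variable is what controls the net effect on any coordinate, and that for any discrepancy between two normal forms there is a coordinate witnessing it. Once this combinatorial bookkeeping is done, the equation $cw = cw'$ between two normal forms valid in all $\updown$-biactions forces the normal forms to be literally equal, and hence $cw = dw'$ follows from (I), (PR), and (C). This mirrors the structure of the proof of Proposition~\ref{pmequations}, with the added layer of tracking two sorts of operations and their commutativity.
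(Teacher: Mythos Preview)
Your proposal is correct and follows essentially the same approach as the paper: the paper's proof is the single line ``Similar to the proof of Proposition~\ref{pmequations},'' and what you have written is precisely the natural expansion of that deferral---reduce to a normal form using (I), (PR), and (C), then separate distinct normal forms in $F(1)$ working from right to left. One small remark: you do not need ``$F(X)$ with $X$ large enough''; as in Proposition~\ref{pmequations}, the two-element biaction $F(1)$ already suffices for every separation (set later-block variables to the identity element of their sort and the first differing variable to the constant element), so the bookkeeping you anticipate is no worse than in the $\updown$-action case.
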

\begin{proof}
Similar to the proof of Proposition~\ref{pmequations}.
\end{proof}

With the equational theory under our belt, we may now more easily present an example of an $\updown$-action which can't be reinterpreted as an $\updown$-biaction.

\begin{example}\label{separating sorts}
Let $C=\{c,d,e\}$ where $c=\{1\}$, $d=\{2\}$, and $e=\{3\}$. Let $S=\{s_c,s_d,s_e\}$ where $s_c=(\{1\},\{1\})$, $s_d=(\{2\},\{2\})$, and $s_e=(\{3\},\{3\})$. Each $s_x$ acts as the constant function with value $x$. Clearly $(C,S)$ is an $\updown$-action. The question under consideration is whether we can divide $S$ into two parts $S^\up$ and $S^\down$ so that the resulting biaction is an $\updown$-biaction. Any way of doing this will involve putting two elements of $S$ into the same sort, say $s_x$ and $s_y$ (where $x\neq y$). If we indeed have an $\updown$-biaction we should have $y=cs_xs_y=cs_ys_x=x$, which is a contradiction.
\end{example}

Once again, the equational theory is not enough to axiomatize the class in question, as the following example illustrates.

\begin{example}
Let $C=\{c,d\}$, $S^\down=\{s\}$, and $S^\up=\{t\}$. Define $cs=ct=ds=dt=d$. This biaction is idempotent, previous redundant, and commutative in $S^\down$ and $S^\up$. However, it can't be an $\updown$-biaction because we can't go from $c$ to $d$ by removing elements by $s$ on the one hand and adding elements by $t$ on the other ($cs=d$ implies $d\subseteq c$ and $ct=d$ implies $c\subseteq d$).
\end{example}

So we need to add Horn clause axioms to supplement our equational ones. The first axiom is called the \emph{basic subset axiom}. Let $s\in S^\down$ and $t\in S^\up$. If $cs=ds$ and $ct=dt$ and $es=et$, then $c=d$. Let's see why this axiom is true for the $\updown$-biactions. First note that $es=et$ implies that $t\subseteq et = es \subseteq s$. Next, $cs=ds$ implies $c$ and $d$ agree inside $s$, and $ct=dt$ implies $c$ and $d$ agree outside $t$. Since $t$ is a subset of $s$, we get that $c$ and $d$ agree everywhere. 

Next we add a series of modified versions of the basic subset axiom. For each word $w$ consisting of variables of sorts $S^\down$ and $S^\up$, we add $w$ to the end of each term that occurs in the basic subset axiom to form a new axiom. That is, we get an axiom
\[csw=dsw\wedge ctw=dtw\wedge esw=etw\rightarrow cw=dw\]
for each word $w$. Let's call all these axioms the \emph{extra subset axioms}.

Let's check that the extra subset axioms are true in $\updown$-biactions. We will do this by induction as follows. Suppose that we have a Horn clause 
\[
(\star)\hspace{.25in} (x_1=y_1\wedge\cdots\wedge x_n=y_n)\rightarrow (x_{n+1}=y_{n+1}),
\]
where $x_i,y_i$ are terms, which is universally true in $\updown$-biactions. Let $s$ be a variable of sort $S^\down$ or $S^\up$. We wish to show that
\[
(x_1s=y_1s\wedge\cdots\wedge x_ns=y_ns)\rightarrow (x_{n+1}s=y_{n+1}s)
\]
is also universally true in $\updown$-biactions. This will be enough, since each extra subset axiom can be built up from the basic subset axiom adding one variable at a time. Consider an $\updown$-biaction $(C,S^\down,S^\up)$ and assignment of variables so that $x_is=y_is$ for all $1 \leq i \leq n$. 

In the case $s$ is of sort $S^\down$, form a new $\updown$-biaction $(C\cap s,S^\down\cap s,S^\up\cap s)$ which is the restriction of the original $\updown$-biaction to $s$. In detail, $C\cap s=\{c\cap s\mid c\in C\}$, $S^\down\cap s=\{t\cap s\mid t\in S^\down\}$, and $S^\up\cap s=\{t\cap s\mid t\in S^\up\}$. There is an obvious homomorphism $\vphi$ from the original to the restriction given by intersection by $s$ on each sort. Since $\vphi(x_1)=x_1s$ and $\vphi(y_1)=y_1s$ and so on, we have by assumption 
\[
\vphi(x_1)= \vphi(y_1)\wedge\cdots\wedge\vphi(x_n)= \vphi(y_n)
\]
and, since $\vphi$ is a homomorphism, these equations are an instance of the premises of $(\star)$ in the restriction. Since the restriction is an $\updown$-biaction, we get the conclusion of $(\star)$, $\vphi(x_{n+1})=\vphi(y_{n+1})$. Hence $x_{n+1}s=y_{n+1}s$ in $(C,S^\down,S^\up)$, as desired.

In the case $s$ is of sort $S^\up$, we form a new $\updown$-biaction $(C\cup s,S^\down\cup s,S^\up\cup s)$, which is essentially the restriction of the original biaction to the complement of $s$. The argument goes just as in the $S^\down$ case. Alternatively, this case follows from the duality of $\updown$-biactions. 

\begin{example} \label{words are needed} We provide an example showing that the extra subset axioms do not follow from previous redundance, commutativity in $S^\down$ and $S^\up$, and the basic subset axiom. This example also shows that a weakened version of the strong links axiom (for $\updown$-actions) is not sufficient. In detail, the weakened strong links axiom is as follows: When $a_0,a_1,\ldots,a_n\in C$ and $s_1,\ldots,s_n\in S$ with $a_0s_i=a_ns_i$ and $a_{i-1}s_i=a_{i-1}$ and $a_is_i=a_i$ for $i=1,\ldots,n$, then $a_0=a_n$. In the actual strong links axiom we allow $s_i\in S$ to be replaced by an arbitrary word $w_i\in S^*$. 

Consider the biaction given by the diagram below (as usual, it may also be thought of as an action). There are three sentences $s$, $t$, and $u$. We put $s\in S^\down$, and $t,u\in S^\up$. The six elements $\{c,d,e,f,1,2\}$ of $C$ form two components. If a sentence fixes an element, then that arrow is not shown.
\[
\xymatrix{
 & f \ar[dd]^s & & & \\
 c \ar@/^/[dr]^s \ar[ur]^t & & d \ar@/_/[dl]_s \ar[ul]_u & &  1\ar@/^/[d]^s \\
 & e \ar@/^/[ul]^u \ar@/_/[ur]_t & & & 2 \ar@/^/[u]^{t,u}
}
\]
Each component separately is an $\updown$-biaction. To realize the first component, $\{c,d,e,f\}$, as an $\updown$-biaction, we may let $c=\{\#\}$, $d=\{\$\}$, $e=\emptyset$, $f=\{\#,\$\}$, $s=\emptyset$, $t=\{\$\}$, and $u=\{\#\}$. To realize the second component, $\{1,2\}$, as an $\updown$-biaction, we may let $1=\{\%\}$, $2=\emptyset$, $s=\emptyset$, $t=\{\%\}$, and $u=\{\%\}$. Since each of the equational axioms has the same $C$-variable occurring on both sides, the fact that each component separately is an $\updown$-biaction implies that the biaction as a whole satisfies the equational axioms. This biaction also satisfies the basic subset axiom: for no $a\in C$ do we have $as=at$ or $as=au$. The fact that it satisfies the basic subset axiom actually implies that the associated action satisfies the weakened version of the strong links axiom, though this can also be separately checked. Also, the associated action does not satisfy the actual strong links axiom. To see this, note that $csu=c$, $1su=1$, $1st=1$, and $dst=d$, so there is a link between $c$ and $d$, but this link is also strong because $csu=c=dsu$ and $cst=d=dst$. The fact that the associated action is not an $\updown$-action implies that the biaction can't satisfy the extra subset axioms. For a specific example, note that $csu=dsu$, $ctu=dtu$, and $1su=1tu$, yet $cu=c\neq f=du$. 
\end{example}

\begin{lemma}
\label{adding t lemma}
Let $B=(C, S^\down, S^\up)$ be a biaction satisfying the axioms (idempotence, previous redundance, commutativity in $S^\down$ and $S^\up$, and the basic and extra subset axioms). Let $t$ be an element of $S^\down$ or $S^\up$. Define a biaction $B_t=(C_t,S^\down_t,S^\up_t)$ by putting
\begin{align*}
C_t&=\{ct\in C\mid c\in C\}\\
S^\down_t&=S^\down\\ 
S^\up_t&=S^\up,
\end{align*}
and defining the action as follows: given $s\in S^\down_t$ or $s\in S^\up_t$, and $c\in C_t$ we define the $B_t$-action of $s$ on $c$ to be $cst$. Then $B_t$ also satisfies the axioms. 
\end{lemma}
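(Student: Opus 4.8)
The plan is to give an explicit description of the $B_t$-action of an arbitrary word in terms of the $B$-action, use it to dispose of the equational axioms cheaply, and then reduce the basic and extra subset axioms for $B_t$ to the \emph{extra} subset axioms of $B$ applied to words ending in~$t$. This parallels the role played by Lemma~\ref{star} in the proof of Theorem~\ref{main theorem}.

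First I would record the structural preliminaries. Since $t$ is idempotent, every element of $C_t$ is fixed by $t$: if $x = ct$ then $xt = ctt = ct = x$. In particular $cst \in C_t$ always, so $B_t$ is a genuine biaction, and moreover the map $\psi\colon B\to B_t$ which sends $c\mapsto ct$ on the sort $C$ and is the identity on $S^\down$ and $S^\up$ is a \emph{surjective homomorphism}: for $s$ in either $S$-sort, $\psi(c)\cdot_t s = (ct)st = ctst = cst = \psi(cs)$, where $ctst = cst$ is an instance of previous redundance for an arbitrary pair of $S$-elements (this holds by the (PR) axioms when the two elements lie in different sorts and by (C) together with (I) when they lie in the same sort, as already noted in the text). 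Recall also that $B$ is fully previous redundant, $csws = cws$.

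The computational core is the word formula: for $c\in C_t$ and a word $w$ over $S^\down\cup S^\up$, the $B_t$-action of $w$ on $c$ equals $cwt$. I would prove this by induction on the length of $w$. The empty-word case is exactly the fact that $t$ fixes $C_t$. For $w = w's$, the induction hypothesis gives that the $B_t$-action of $w'$ on $c$ is $cw't$, which again lies in $C_t$, so the $B_t$-action of $w$ on $c$ is $(cw't)st = cw'tst = cw'st = cwt$, using previous redundance for the pair $\{t,s\}$ (in the form $(cw')tst = (cw')st$). With this formula, the equational axioms for $B_t$ are immediate: idempotence, previous redundance, and commutativity in $S^\down$ and $S^\up$ all hold in $B_t$ because $B_t$ is a homomorphic image of $B$ and equations are preserved by surjective homomorphisms (alternatively one checks them directly, e.g. $c\cdot_t s\cdot_t u\cdot_t s = c(sus)t = c(us)t = c\cdot_t u\cdot_t s$ by previous redundance in $B$).

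It remains to verify the subset axioms for $B_t$. For the basic subset axiom, suppose $s\in S^\down$, $v\in S^\up$, and $c,d,e\in C_t$ satisfy $c\cdot_t s = d\cdot_t s$, $c\cdot_t v = d\cdot_t v$, and $e\cdot_t s = e\cdot_t v$; unwinding the definition, these say $cst = dst$, $cvt = dvt$, and $est = evt$, which is precisely an instance of the premise of the extra subset axiom of $B$ for the word $w = t$. That axiom then yields $ct = dt$, and since $t$ fixes $C_t$ this gives $c = d$. The extra subset axioms for $B_t$ are handled the same way: given a word $w$, the hypotheses $c\cdot_t s\cdot_t w = d\cdot_t s\cdot_t w$, etc., become $cswt = dswt$, $cvwt = dvwt$, $eswt = evwt$ by the word formula, which is the premise of the extra subset axiom of $B$ for the word $wt$; its conclusion $cwt = dwt$ is, again by the word formula, exactly $c\cdot_t w = d\cdot_t w$.

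The only real subtlety — more a matter of setting things up correctly than a genuine obstacle — is recognizing that the subset axioms for $B_t$ must be derived from the \emph{extra} subset axioms of $B$, not the basic one alone: moving from $B$ to $B_t$ amounts to postcomposing everything with the operation $t$, and it is exactly this postcomposition that the extra subset axiom schema is built to absorb.
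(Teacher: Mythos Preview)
Your proof is correct and follows the same approach as the paper. The paper's proof is terser --- it simply says ``applying previous redundance to remove intermediate $t$'s, we have $x_it=y_it$ in $B$'' --- whereas you make this explicit via the ``word formula'' $c*_t w = cwt$ and package the equational part as preservation under the surjective homomorphism $\psi$; but the underlying computation and the reduction of the subset axioms in $B_t$ to the extra subset axioms in $B$ with the word $wt$ are identical.
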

\begin{proof}
The equational axioms are easy to check. For example, let $c\in C_t$, and $s,u\in S^\down_t$. Then commutativity of $s$ and $u$ on $c$ in $B_t$ amounts to the equation $cstut = cutst$ in $B$, which is equivalent to $csut = cust$ in $B$ by previous redundance, and this last equation is true by commutativity in $B$.

It remains to check $B_t$ satisfies the subset axioms. Let one of the subset axioms be given, written as
\[
(\star) \hspace{.25in} (x_1=y_1\wedge\cdots\wedge x_n=y_n)\rightarrow (x_{n+1}=y_{n+1}).
\]
Suppose variables are given assignments in such a way that $x_1=y_1,\ldots,x_n=y_n$ in $B_t$. Then, applying previous redundance to remove intermediate $t$'s, we have $x_it=y_it$ in $B$ for all $1\leq i \leq n$. And so we may cite the extra subset axiom obtained from $(\star)$ by adding $t$ to every term to conclude that $x_{n+1}t=y_{n+1}t$ in $B$, i.e.\ $x_{n+1}=y_{n+1}$ in $B_t$.
\end{proof}

\begin{theorem}
\label{biaction theorem}
A biaction is an $\updown$-biaction if and only if it is idempotent, previous redundant, and commutative in $S^\down$ and $S^\up$, and it satisfies the basic and extra subset axioms.
\end{theorem}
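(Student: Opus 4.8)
The plan is to run the same template as the proof of Theorem~\ref{main theorem}, with the basic and extra subset axioms playing the role that the strong links axiom played there and with Lemma~\ref{adding t lemma} playing the role of Lemma~\ref{star}. One direction is already done: the paragraphs preceding the theorem verify that every $\updown$-biaction is idempotent, previous redundant, commutative in $S^\down$ and $S^\up$, and satisfies the basic and extra subset axioms. For the converse I would first record (by exactly the argument of Proposition~\ref{pseudo-elementary actions}, with an extra membership sort and relations $\in$, $\in^\down$, $\in^\up$) that the class of $\updown$-biactions is pseudo-elementary, and it is closed under substructure by definition, so Proposition~\ref{fg} reduces the problem to finitely generated models of the axioms. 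As for $\updown$-actions, idempotence and previous redundance force full previous redundance, so any term over a finite generating set reduces to one in which each $S^\down$- or $S^\up$-generator occurs at most once; since there are no operations into the $S$ sorts, a finitely generated model is finite, and we may induct on $|C|$.

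The inductive step consists in producing, for every pair of distinct elements in a common sort, a homomorphism into some $\updown$-biaction separating them; the product of all these maps is then an embedding into a product of $\updown$-biactions, which is an $\updown$-biaction because $F$ turns disjoint unions into products. Distinct elements of $S^\down$ are separated by the homomorphism into the full $\updown$-biaction on $S^\down$ sending every element of $C$ and of $S^\up$ to $\emptyset$ and $s\in S^\down$ to $\{s\}$; distinct elements of $S^\up$ are handled dually, sending $C$ and $S^\down$ to the whole ground set $S^\up$ and $t\in S^\up$ to $\{t\}$. This also settles the base case $|C|=1$. For $|C|>1$ and $c\neq d$ in $C$ I split into two cases as in Theorem~\ref{main theorem}. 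In \emph{Case~1}, where some non-identity $t\in S^\down\cup S^\up$ has $ct\neq dt$, I pass to the biaction $B_t$ of Lemma~\ref{adding t lemma}, which satisfies the axioms; the map that is $c\mapsto ct$ on $C$ and the identity on $S^\down$ and $S^\up$ is a homomorphism $B\to B_t$ (the homomorphism equations reduce to previous redundance and within-sort commutativity, using $ctst=cst$ and its variants), it separates $c$ and $d$, and $|C_t|<|C|$ because otherwise $t$ would be an identity operation, so the induction hypothesis applies to $B_t$.

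In \emph{Case~2}, where $ct=dt$ whenever $t$ is not an identity operation, I use the relation $\approx$ defined exactly as before, with the witnessing operations ranging over $S^\down\cup S^\up$; as in Lemma~\ref{congruence actions}, $\approx$ is a congruence for both actions. If $c\not\approx d$, the quotient map $q\colon (C,S^\down,S^\up)\to(\overline C,S^\down,S^\up)$ separates $c$ and $d$, and a biaction analogue of Lemma~\ref{congruence actions}, to be proved, says that this quotient is an $\updown$-biaction. If instead $c\approx d$, a chain $c=a_0,a_1,\dots,a_n=d$ of non-identity operations $s_1,\dots,s_n$ witnessing $c\approx d$, together with the Case~2 hypothesis (which gives $cs_i=ds_i$ since each $s_i$ is non-identity), must be shown to force $c=d$, contradicting $c\neq d$.

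The two points just flagged in Case~2 are where I expect the real work, and the second is the main obstacle. For the analogue of Lemma~\ref{congruence actions}: in $\overline C$ every element of $S^\down$ (resp.\ $S^\up$) acts as an identity or constant operation, and — crucially using commutativity in $S^\down$ and $S^\up$ — all non-identity $S^\down$-operations collapse to a single value $\bot$ and all non-identity $S^\up$-operations to a single value $\top$ (if $v,v'$ are two such values, then $vs'=v's$ by commutativity, whence $\overline v=\overline{v'}$). One then writes down an explicit embedding of $(\overline C,S^\down,S^\up)$ into a full $\updown$-biaction in which $\bot$ becomes a least set and $\top$ a greatest set, separating the various copies of the constant and identity operations by dummy elements placed inside $\bot$ and outside $\top$; this bookkeeping is fiddly but routine. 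The genuinely delicate step is the $c\approx d$ contradiction: one must show that a biaction satisfying the axioms has no nontrivial ``single-element strong link'', i.e.\ if $c=a_0,a_1,\dots,a_n=d$ and non-identity $s_i$ fix $a_{i-1},a_i$ and $cs_i=ds_i$ for all $i$, then $c=d$. For $n=1$ this is immediate, and for $n=2$ it follows from within-sort commutativity when $s_1,s_2$ lie in the same sort and from the basic subset axiom (instantiated with $e=a_1$) when they lie in different sorts; the general case requires an induction on $n$ that collapses or shortens the chain according to the sorts of the $s_i$, using commutativity, previous redundance, and the basic subset axiom — this is the fact alluded to, for a concrete instance, in Example~\ref{words are needed}. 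The extra subset axioms, by contrast, enter the proof only through Lemma~\ref{adding t lemma} in Case~1.
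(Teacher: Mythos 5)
Your proposal is correct and follows the paper's architecture exactly: the reduction to finite models via Propositions~\ref{pseudo-elementary actions}, \ref{F(1)}, and~\ref{fg}, induction on $|C|$, the same separating maps on the $S$ sorts, and an identical Case~1 via Lemma~\ref{adding t lemma}. The one real divergence is Case~2. You take $\approx$ to be the transitive closure of the one-step fixed-point relation, as in Lemma~\ref{congruence actions}, and must then collapse an arbitrary chain $c=a_0,\dots,a_n=d$; the paper instead defines a coarser relation outright --- all elements fixed by some non-identity $S^\down$-operation form one class, all elements fixed by some non-identity $S^\up$-operation form another --- and invokes the basic subset axiom exactly once, to show these two classes are disjoint (which is what makes the relation transitive); after that, $c\not\approx d$ is a two-line computation and the quotient has at most two non-singleton classes $a_\down\neq a_\up$, so the explicit embedding is easy to write down. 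The chain-collapsing step you flag as the main obstacle does close, and in fact needs no induction on $n$: if two consecutive witnesses $s_i,s_{i+1}$ lie in different sorts, then $a_i$ is fixed by non-identity operations of both sorts and the basic subset axiom with $e=a_i$ gives $c=d$ outright; otherwise all $s_i$ lie in one sort, and then $c=cs_1=ds_1=(cs_n)s_1=cs_1s_n=cs_n=ds_n=d$ by the Case~2 hypothesis, the fixed-point equations, and within-sort commutativity --- only $s_1$ and $s_n$ matter. The same dichotomy is what guarantees, in your embedding of the quotient, that $\bot\neq\top$ (otherwise a chain joining an element fixed by a non-identity $S^\down$-operation to one fixed by a non-identity $S^\up$-operation would produce an element fixed by both sorts, forcing $c=d$); you should make that explicit, since it is needed for the ``least set/greatest set'' picture and is not mere bookkeeping.
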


\begin{proof}
When introducing the axioms, we proved that $\updown$-biactions satisfy these axioms. So it remains to show that a biaction satisfying these axioms is an $\updown$-biaction. 

We can do the same tricks we did in the case of $\updown$-actions: $\updown$-biactions form a pseudo-elementary class by essentially the same argument as in Proposition~\ref{pseudo-elementary actions} for $\updown$-actions.  Also, $\updown$-biactions are the subalgebras of full $\updown$-biactions of the form $F(X)$, and $F$ is a function which turns disjoint unions of sets into products of algebras, so the class of $\updown$-biactions is closed under substructure and product (Proposition~\ref{F(1)}). So given a biaction $B=(C,S^\down,S^\up)$ satisfying the axioms, it suffices to find, for each pair of distinct elements in the same sort, a homomorphism to an $\updown$-biaction separating these two elements.

Our axioms are universal (in fact they are Horn clauses) and so by Proposition~\ref{fg}, we need only check that every finitely generated model of the axioms is an $\updown$-biaction. But, once again, full previous redundance implies that every finitely generated model is finite, and we can do induction on $|C|$. 

First we show that no matter what the size of $C$, we can separate elements in sort $S^\down$ and in sort $S^\up$. Let's consider $S^\down$. Define a map $\vphi\colon (C,S^\down,S^\up)\to F(S^\down)$ by $c\mapsto \emptyset$ for all $c\in C$, $s\mapsto \{s\}$ for all $s\in S^\down$, and $t\mapsto\emptyset$ for all $t\in S^\up$. It's easy to check that $\vphi$ is a homomorphism and it is injective on $S^\down$.  The sort $S^\up$ works dually.

We turn now to $C$. In the base case, when $|C|=1$, there is no pair of distinct elements to separate in sort $C$, and so we're done. So let $|C|>1$ and let $c\neq d$ in $C$. 

\emph{Case 1:} There is a $t\in S^\down\cup S^\up$ such that $ct\neq dt$ and $t$ is not an identity operation. Consider the biaction $B_t=(C_t,S^\down_t, S^\up_t)$ defined as in Lemma~\ref{adding t lemma}. 

We claim that $|C_t|<|C|$. Of course $|C_t|\leq |C|$, since $C_t\subseteq C$. If $|C_t|=|C|$, then for every $c\in C$ there is $d\in C$ such that $dt=c$. Then $ct=dtt=dt=c$ by idempotence, and so $t$ is an identity operation, contrary to assumption.

Because $|C_t|<|C|$ and $B_t$ satisfies the axioms, by the inductive hypothesis we can conclude that $B_t$ is an $\updown$-biaction.

Consider the map $\vphi\colon B\to B_t$ defined by $\vphi(c)=ct$ for $c\in C$ and $\vphi(s)=s$ for $s$ in either $S^\down$ or $S^\up$. This is a homomorphism, since $\vphi(cs) = cst = ctst = \vphi(c)\vphi(s)$, and it has $\vphi(c)\neq\vphi(d)$ by assumption. Hence we've found a separating homomorphism to an $\updown$-biaction.

\emph{Case 2:} For every $t\in S^\down\cup S^\up$, either $ct=dt$ or $t$ is an identity operation.

We form a quotient $\bar{B}=(\bar{C},S^\down,S^\up)$ of $B=(C,S^\down,S^\up)$ as follows. For $a,b\in C$, we put $a\approx b$ when $as=a$ and $bt=b$ for some non-identity operations $s,t\in S^\down$ or some non-identity operations $s,t\in S^\up$, or $a=b$. In other words, we identify all the elements of $C$ which are fixed by any non-identity operation in $S^\down$, and similarly we identify all the elements of $C$ which are fixed by any non-identity operation in $S^\up$. To show this is transitive, it suffices to show that there is no element which is fixed by both a non-identity operation in $S^\down$ and a non-identity operation in $S^\up$. Suppose that for some $e\in C$, $es=e=et$ where $s\in S^\down$ and $t\in S^\up$ and $s$ and $t$ are not identity operations. Then also $cs=ds$ and $ct=dt$ by assumption, and so the premises of the basic subset axiom are satisfied. We conclude that $c = d$, which is a contradiction.

Now let's check that $\approx$ is a congruence. If $a\approx b$ and $s$ is in $S^\down$ or $S^\up$, then $as\approx bs$ because either $s$ is an identity operation and $as=a\approx b = bs$, or $ass=as$ and $bss=bs$ witness that $as\approx bs$. 

Next we show $c\not\approx d$. Suppose for contradiction that $c\approx d$, and suppose that this is witnessed by $s,t\in S^\down$ non-identity operations such that $cs=c$ and $dt=d$ (the case $s,t\in S^\up$ is the same). By our assumptions, we get $c=cs=ds$ and $d = dt = ct$. But then $c = ds = cts = cst = ct = d$, a contradiction.

So the quotient map is a homomorphism from $B$ to $\bar{B}$ that separates $c$ and $d$. It remains to show that $\bar{B}$ is an $\updown$-biaction. Observe that every non-identity operation $s\in S^\down$ is a constant operation with the same constant in each case, since if $a,b\in C$ and $s,t\in S^\down$ are non-identity operations, $ass = as$ and $btt = bt$ witnesses that $as \approx bt$. The same is true for $S^\up$. The argument for transitivity above showed also that these constants must be different.  Let's call them $a_\down$ and $a_\up$. We define a map $\vphi\colon \bar{B}\to F(\bar{C}\cup S^\down\cup S^\up)$ as follows. 
\begin{align*}
a&\mapsto\begin{cases}
\{a\}\cup S^\up &\text{ if }a\neq a_\down,a_\up\\
S^\up &\text{ if }a=a_\down\\
\bar{C}\cup S^\up &\text{ if }a=a_\up
\end{cases}\\
s\in S^\down&\mapsto\begin{cases}
\bar{C}\cup\{s\}\cup S^\up&\text{ if }s\text{ is an identity}\\
\{s\}\cup S^\up &\text{ if }s\text{ is constant}
\end{cases}\\
s\in S^\up&\mapsto\begin{cases}
\{s\}&\text{ if }s\text{ is an identity}\\
\bar{C}\cup\{s\}&\text{ if }s\text{ is constant}
\end{cases}
\end{align*}
It is easily checked that this is a homomorphism and it is 1-1 on each sort. 
\end{proof}
 
\section{Set bands}\label{set bands}

Let $(C,S)$ be a full $\updown$-action on some set. What product $\cdot$ can we put on $S$ so that $(cs)t=c(s\cdot t)$? The following calculation gives an answer. Let $s=(s^\down, s^\up)$ and $t=(t^\down,t^\up)$. Then,
\begin{align*}
(cs)t &= (((c\cap s^\down)\cup s^\up) \cap t^\down)\cup t^\up\\
&= (c\cap s^\down \cap t^\down) \cup (s^\up\cap t^\down)\cup t^\up\\
&=(c\cap (s^\down \cap t^\down)) \cup (c\cap t^\up) \cup (s^\up\cap t^\down)\cup t^\up\\
&= (c \cap ((s^\down\cap t^\down)\cup t^\up)) \cup ((s^\up\cap t^\down)\cup t^\up).
\end{align*}

This motivates the following definition. Given a set $X$, we form an algebra $(F(X),\cdot)$, called the \emph{full set band on $X$}, by setting
\begin{align*}
F(X)&=\{(s^\down,s^\up)\mid s^\down,s^\up\subseteq X\text{ and } s^\up\subseteq s^\down\}\\
(s^\down,s^\up)\cdot(t^\down,t^\up)&=((s^\down\cap t^\down)\cup t^\up,(s^\up\cap t^\down)\cup t^\up)
\end{align*}
In general, an algebra is called a \emph{set band} if it isomorphic to a subalgebra of the full set band on some set $X$.  

Set bands are indeed bands (idempotent semigroups), and their definition involves intersection and union, hence the name ``set bands". Further, set bands are right regular ($xyx=yx$). One way to check this is to observe that every set band is the semigroup of operations for some $\updown$-action $(C,S)$, i.e.\ the semigroup of functions on $C$ generated by $\{f_s\colon C\to C\mid s\in S\}$, and right regularity follows from previous redundance. That conversely every semigroup of operations of an $\updown$-action (which is, a priori, a quotient of a set band) is a set band follows from Theorem~\ref{set bands theorem}.

We state without proof (due to the similarity with $\updown$-actions) a few facts about set bands.

\begin{proposition}\label{equations}
Associativity, idempotence, and right regularity axiomatize the equational theory of set bands.
\end{proposition}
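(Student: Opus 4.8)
The plan is to mimic the proof of Proposition~\ref{pmequations}. One direction is already recorded in the surrounding text: every set band is a semigroup, is idempotent (it is a band), and is right regular, so every set band satisfies associativity, idempotence, and $xyx = yx$. For the converse we must show that every equation between terms holding in all set bands is a consequence of these three axioms.

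First I would reduce to a single finite algebra. The operation $X \mapsto F(X)$ sending a set to the full set band on $X$ turns disjoint unions into products, by exactly the routine check used for $\updown$-actions (a pair $(s^\down, s^\up)$ of subsets of $X \uplus Y$ with $s^\up \subseteq s^\down$ splits componentwise, and the product operation is componentwise). Hence Proposition~\ref{F(1)} applies and the equational theory of the class of set bands equals the equational theory of $F(1)$. Computing $F(1)$ with $1 = \{*\}$, $0 = \emptyset$: its elements are $(1,0)$, $(0,0)$, $(1,1)$, and from the product formula $(1,0)$ is a two-sided identity $e$, while $(0,0) = a$ and $(1,1) = b$ are both right zeros; so $F(1)$ is the three-element monoid with $ex = x$, $xa = a$, $xb = b$ for all $x$, and $a \neq b$. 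Next I would prove the normal-form lemma: using associativity, idempotence, and the instance $x\beta x = \beta x$ of right regularity (with $\beta$ an arbitrary term substituted for the middle variable), every term $w$ in variables $x_1,\dots,x_m$ is provably equal to the injective word $\hat w$ obtained by deleting, for each variable, all occurrences except the rightmost. This is a straightforward induction: if some variable $x$ occurs at least twice, write $w = \alpha\, x\, \gamma\, x\, \delta$ with $\delta$ containing no $x$, and rewrite $x\gamma x = \gamma x$ to remove one occurrence, repeating until $w$ is injective. So it suffices to show: if $\hat u \neq \hat v$, then $u = v$ fails in $F(1)$ (note $u = \hat u$ and $v = \hat v$ already hold in $F(1)$, since it is a set band).

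The last step is the case analysis producing a falsifying assignment into $F(1)$, given distinct nonempty injective words $\hat u, \hat v$. (i) If they end in different letters, send those two letters to $a$ and $b$ respectively and all other variables to $e$; since $e$ is the identity and $a, b$ are right zeros, $\hat u$ evaluates to $a$ and $\hat v$ to $b$ (an internal occurrence of the letter sent to $b$ inside $\hat u$ only contributes $b$ and is then overridden by the trailing $a$, and dually). (ii) If they end in the same letter but some variable occurs in one and not the other, send that variable to $a$ and everything else to $e$: one side evaluates to $a$, the other to $e$. (iii) If they end in the same letter and involve the same variables, they are distinct permutations, so there are variables $x, y$ with $x$ before $y$ in $\hat u$ and $y$ before $x$ in $\hat v$; sending $x \mapsto a$, $y \mapsto b$, and everything else to $e$ gives $\hat u \mapsto ab = b$ and $\hat v \mapsto ba = a$. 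In every case $u$ and $v$ receive distinct values in $F(1)$, contradicting the hypothesis, so $\hat u = \hat v$ and $u = v$ follows from the axioms. The only step with genuine content is the normal-form lemma together with the bookkeeping in case (iii); the main thing to watch is that in cases (i) and (ii) the ``other'' word may contain a letter mapped to $a$ or $b$ in a non-final position, but because $a, b$ are right zeros and $e$ is the identity, such occurrences still produce the stated value.
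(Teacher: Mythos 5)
Your argument is correct and is exactly the adaptation of the proof of Proposition~\ref{pmequations} that the paper has in mind (the paper omits the proof of this proposition precisely because of that similarity): reduce to the equational theory of $F(1)$ via Proposition~\ref{F(1)}, derive the injective normal form from idempotence and right regularity, and separate distinct normal forms by assignments into the three-element algebra with an identity and two right zeros. The only cosmetic difference is that you organize the separation step by a three-way case split (last letters, variable sets, inversion pairs) rather than the paper's right-to-left induction, but the content is the same and your cases are exhaustive.
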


The class of set bands is pseudo-elementary, and $F$ turns disjoint unions into products, and so the set bands are the subalgebras of the products of the algebra $F(1)$ and admit a Horn clause axiomatization. 

In \cite{Margolis}, Margolis et al.\ study the class of subsemigroups of the ``hyperplane face monoids", which they identify as the quasivariety of algebras generated (under subalgebra and product) by a certain three-element algebra. This algebra is essentially $F(1)$, with the superficial difference that the order of multiplication is reversed (e.g. it is left regular instead of right regular), and so their algebras are exactly the set bands, after reversing the multiplication. They show that this quasivariety is axiomatized by associativity, idempotence, left regularity, and a schema of Horn clauses which is called (CC) in \cite{Margolis}, and which is very similar to our condition on $\updown$-actions that all strong links are trivial. Their method led directly to the proof of Theorem~\ref{main theorem}, and inspired the proof of Theorem~\ref{biaction theorem}. We think it is interesting that the same class of algebras arose in these two ways, with such different motivations.

For completeness, we'll state a version of the theorem characterizing set bands, adapted to our vocabulary. We say that two elements $c,d$ of an algebra $(S,\cdot)$ are \emph{strongly linked} when for some natural number $n$ there exist $a_0,\dots,a_n$ and $s_1,\dots,s_n$ in $S$ such that $c=a_0$, $d=a_n$, and for $i = 1,\dots,n$, $a_{i-1}s_i=a_{i-1}$, $a_{i}s_i=a_{i}$, and $cs_i=ds_i$, and we say that the strong link between $c$ and $d$ is \emph{trivial} when $c = d$. 

\begin{theorem}\label{set bands theorem}
Set bands are axiomatized by associativity, idempotence, right regularity, and the condition that all strong links are trivial.
\end{theorem}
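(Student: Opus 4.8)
The plan is to reduce Theorem~\ref{set bands theorem} to the already-established Theorem~\ref{main theorem} by exploiting the tight correspondence between set bands and $\updown$-actions. First I would check the easy direction: every set band is a subalgebra of a full set band $F(X)$, and a direct computation (analogous to the one opening Section~\ref{set bands}) shows that $F(X)$ satisfies associativity, idempotence, right regularity, and that all strong links in $F(X)$ are trivial. The last point follows exactly as in Proposition~\ref{check}: since $F$ turns disjoint unions into products, it suffices to check it for $F(1)$, which is a three-element algebra in which one verifies by hand that no $1$-step strong link is nontrivial, and any nontrivial strong link must be at least two steps by the same argument given for $\updown$-actions. Since strong links, associativity, idempotence, and right regularity are all preserved under substructure and product, the forward direction follows.

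For the converse, suppose $(S,\cdot)$ satisfies the four conditions; I want to produce an embedding into a full set band. The key move is to build from $(S,\cdot)$ an action and apply the $\updown$-action machinery. Right regularity plus idempotence gives full right regularity ($xywx = ywx$), the semigroup analogue of Lemma~\ref{fullypr}, so I would first prove that. Then consider the action $(S, S)$ where $S$ acts on itself by right multiplication, i.e.\ $f(c,s) = c\cdot s$; here associativity makes this a genuine action of the monoid/semigroup, idempotence of the algebra gives idempotence of the action, and right regularity $csts = cts$... wait — right regularity is $xyx = yx$, which gives $s\cdot t\cdot s = t\cdot s$, and more generally via associativity $c\cdot s\cdot t\cdot s = c\cdot t\cdot s$, i.e.\ previous redundance of the action. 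Crucially, the notion of ``strongly linked'' for the algebra $(S,\cdot)$ as defined just before the theorem matches ``strongly linked in the action $(S,S)$ via single-letter words $w_i = s_i$'' — but the action definition of strong link allows words $w_i \in S^*$, which here just collapse back to elements of $S$ since $S$ is closed under $\cdot$ and full right regularity handles repeated letters. So all strong links are trivial in the action $(S,S)$, and by Theorem~\ref{main theorem} it is an $\updown$-action: there is an embedding $\iota\colon (S,S)\hookrightarrow F(X)$ for some $X$, sending each $s\in S$ (viewed in the $C$-sort) to a subset and each $s\in S$ (viewed in the $S$-sort) to a pair $(s^\down, s^\up)$.

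The final step is to extract from $\iota$ an embedding of the semigroup $(S,\cdot)$ into the full set band on $X$. The natural candidate sends $s\in S$ to its image $(s^\down,s^\up)$ under the $S$-sort part of $\iota$. Injectivity is immediate from injectivity of $\iota$ on the $S$-sort. That it is a homomorphism amounts to checking $(s\cdot t)^\down = (s^\down\cap t^\down)\cup t^\up$ and $(s\cdot t)^\up = (s^\up\cap t^\down)\cup t^\up$; this should follow from the fact that $\iota$ respects the action together with the identity $(cs)t = c(s\cdot t)$ in $(S,S)$ — essentially, the action of $s\cdot t$ on the $C$-sort determines and is determined by the pair, because in the full $\updown$-action the element $(u^\down, u^\up)$ is recoverable from the function $c\mapsto (c\cap u^\down)\cup u^\up$ (apply it to $\emptyset$ and to $X$). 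I expect this last bookkeeping step — transferring the homomorphism property through the functional identification — to be the main technical obstacle, though it is more a matter of careful unwinding than of a genuinely new idea; the substantive content has already been done in Theorem~\ref{main theorem}. This is precisely why the authors state it without giving the proof.
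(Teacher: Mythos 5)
Your forward direction is fine, and so is the passage from the band $(S,\cdot)$ to the action $(S,S)$: right regularity gives previous redundance, words collapse to elements, and strong links in the action reduce to strong links in the algebra, so Theorem~\ref{main theorem} does yield an embedding $\iota$ of the \emph{action} $(S,S)$ into some $F(X)$. The gap is exactly at the step you dismissed as bookkeeping. The identity $(cs)t=c(s\cdot t)$ only tells you that $\iota_S(s\cdot t)$ and the set-band product $\iota_S(s)\cdot\iota_S(t)$ induce the same function on the sets $\iota_C(c)$ for $c\in S$ --- not on all of $\mathcal{P}(X)$, and in particular not on $\emptyset$ and $X$ unless those happen to lie in the image of $\iota_C$. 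So your recovery argument does not apply, and in fact $s\mapsto\iota_S(s)$ need not be a semigroup homomorphism. Concretely, let $S=\{a,b\}$ be the right-zero band ($x\cdot y=y$), which satisfies all four axioms. The action $(S,S)$ embeds into the full $\updown$-action on $\{1,2,3,4\}$ via $\iota_C(a)=\{1\}$, $\iota_C(b)=\{2\}$, $\iota_S(a)=(\{1,3\},\{1\})$, $\iota_S(b)=(\{2,4\},\{2\})$ (one checks this is an injective action homomorphism), yet $\iota_S(a)\cdot\iota_S(b)=(\{2\},\{2\})\neq(\{2,4\},\{2\})=\iota_S(a\cdot b)$. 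The same failure occurs coordinatewise for the canonical embedding of Proposition~\ref{F(1)}: an action homomorphism $(S,S)\to F(1)$ whose $C$-component is not surjective onto $\{0,1\}$ need not have a multiplicative $S$-component (e.g.\ send both of $a,b$ to $0$, $a$ to the identity and $b$ to the constant $(0,0)$; then $b\cdot a=a$ maps to the identity but the product of images is $(0,0)$).

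This is precisely the subtlety the paper flags in Section~\ref{set bands}: the semigroup of operations of an $\updown$-action is \emph{a priori} only a quotient of a subsemigroup of a full set band, and quasivarieties are not closed under quotients, so the set-band statement does not follow formally from Theorem~\ref{main theorem}. You would need to show that a \emph{multiplicative} separating family of homomorphisms to $F(1)$ exists, which is essentially the original problem. The paper does not give a reduction: it states the theorem as an adaptation of the direct argument of Margolis et al.\ in \cite{Margolis}, which runs the same induction-and-two-cases scheme as the proof of Theorem~\ref{main theorem}, but carried out on the semigroup itself rather than pulled back through an action embedding.
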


\end{document}